\newtheorem{thm}{Theorem}[section]
\newtheorem{lem}[thm]{Lemma}
\newtheorem{prop}[thm]{Proposition}
\numberwithin{equation}{section}
\newcommand{\mathr}{\mathbb{R}}
\newcommand{\mathn}{\mathbb{N}}
\newcommand{\mathz}{\mathbb{Z}}
\newcommand{\muball}[2]{\mu(B(#1,e^{-#2}))}
\newcommand{\ubox}{\overline{\text{dim}}_\text{B}}
\newcommand{\lbox}{\underline{\text{dim}}_\text{B}}
\newcommand{\boxd}{\text{dim}_\text{B}}
\newcommand{\aso}{{\text{dim}}_\text{A}}
\newcommand{\low}{{\text{dim}}_\text{L}}
\newcommand{\haus}{{\text{dim}}_\text{H}}
\newcommand{\asospec}{{\text{dim}}^{\theta}_\text{A}}
\newcommand{\lowspec}{{\text{dim}}^{\theta}_\text{L}}
\newcommand{\lset}{L(\Gamma)}
\newcommand{\kmin}{k_{\min}}
\newcommand{\kmax}{k_{\max}}
\newcommand{\hdist}[2]{d_\mathbb{H}(#1, #2)}
\newcommand{\size}[1]{\vert #1 \vert}
\renewcommand{\epsilon}{\varepsilon}
\renewcommand{\geq}{\geqslant}
\renewcommand{\leq}{\leqslant}
\definecolor{lightgray}{rgb}{0.83, 0.83, 0.83}
\title{The Assouad spectrum of Kleinian limit sets \\  and Patterson-Sullivan measure}
\author{Jonathan M. Fraser and Liam Stuart \\ \\
The University of St Andrews, Scotland\\
E-mails: jmf32@st-andrews.ac.uk and  ls220@st-andrews.ac.uk}
\newlength{\bibitemsep}\setlength{\bibitemsep}{.55\baselineskip plus .05\baselineskip minus .05\baselineskip}
\newlength{\bibparskip}\setlength{\bibparskip}{0pt}
\let\oldthebibliography\thebibliography
\renewcommand\thebibliography[1]{%
  \oldthebibliography{#1}%
  \setlength{\parskip}{\bibitemsep}%
  \setlength{\itemsep}{\bibparskip}%
}
\date{}
\begin{document}
\pagenumbering{arabic}
\maketitle
\begin{abstract}
The Assouad dimension of the limit set of a geometrically finite Kleinian group with parabolics  may exceed the Hausdorff and box dimensions.  The Assouad \emph{spectrum} is a continuously parametrised family of dimensions which `interpolates' between the box and Assouad dimensions of a fractal set.  It is designed to reveal more subtle geometric information than the box and Assouad dimensions considered in isolation.  We conduct a detailed analysis of the  Assouad   spectrum  of limit sets of geometrically finite Kleinian groups and the associated Patterson-Sullivan measure.   Our analysis  reveals several novel features, such as  interplay between horoballs of different rank  not seen by the box or Assouad dimensions.  
 
\textit{Mathematics Subject Classification} 2020:     \quad 28A80, \quad  37F32.  

\textit{Key words and phrases}: Kleinian group,   limit set,  Patterson-Sullivan measure,  parabolic points,   Assouad dimension,  Assouad spectrum.
 \end{abstract}

\section{Introduction}\label{Introduction}

Non-elementary Kleinian groups   generate important examples of dynamically invariant fractal sets living on the boundary of hyperbolic space.   Seminal work of Patterson, Sullivan and others established that the Hausdorff, packing and box dimensions of the limit set coincide   in the geometrically finite case and are given by the associated Poincar\'e exponent, denoted by $\delta$. This is perhaps remarkable because   dimension is a fine measure of how  the limit set takes up space on small scales whereas the Poincar\'e exponent is a  coarse measure of the rate of accumulation of orbits to the boundary.  The Assouad dimension is another notion of fractal dimension, which first came to prominence due to the central role it played in, for example, embedding theory and conformal geometry, see \cite{MT, R}.  However, it is rapidly gaining prominence in the literature on fractal geometry and the dimension theory of dynamical systems, see \cite{Fr2}.  Fraser \cite{Fr1} proved that the Assouad dimension of the limit set of a geometrically finite Kleinian group is not necessarily given by the Poincar\'e exponent, but is instead given by $\max\{\delta, \kmax\}$ where $\kmax$ is the maximal rank of a parabolic fixed point.  This result is the starting point of our work, which attempts to understand the gap in-between the box dimension and the Assouad dimension by considering the, more recently  introduced, Assouad \emph{spectrum}.  This spectrum is a continuum of dimensions which  interpolate between the box and Assouad dimensions in a meaningful sense, providing more nuanced information about the scaling structure of the fractal object at hand.   This is part of a more general programme of `dimension interpolation', which is proving to be a useful concept. For example, the Assouad spectrum has found relevance in surprising contexts such as  the study of certain spherical maximal functions \cite{AHRS, RS}.  We compute the Assouad spectrum and (its natural dual) the lower spectrum for limit sets of non-elementary geometrically finite Kleinian groups and the associated Patterson-Sullivan measure.  These results shed some new light on the `Sullivan dictionary' in the context of dimension theory, see \cite{stuartsurvey}.

Our proofs use a variety of techniques.  We use the \emph{global measure formula} which allows  us to utilise  the Patterson-Sullivan measure to estimate the cardinality of efficient covers.  We take some inspiration from the paper \cite{Fr1} which dealt with the Assouad and lower dimensions of Kleinian limit sets.  However, the Assouad and lower spectra require much finer control and therefore many of the techniques from \cite{Fr1} need refined and some need replaced. Since we consider several dual notions of dimension, some of the arguments are analogous and we do our best to suppress repetition.  We stress, however, that calculating the lower spectrum (for example) is not usually a  case of simply `reversing' the Assouad spectrum arguments and subtle differences often emerge. For example, Bowditch's  theorem describing the geometry near  parabolic fixed points is only needed to study the lower spectrum.

For notational convenience, we write $A \lesssim B$ if there exists a constant $C \geq 1$ such that $A \leq CB$, and $A \gtrsim B$ if $B \lesssim A$. We write $A \approx B$ if $A \lesssim B$ and $B \lesssim A$. The constant $C$ is allowed to depend on parameters fixed in the hypotheses of the theorems presented, but not on parameters introduced in the proofs.

\section{Definitions and Background}\label{Prelims}
\subsection{Dimensions of sets and measures}
\label{DimPrelims}
We recall the key notions from fractal geometry and dimension theory which we  use throughout the paper.  For a more in-depth treatment see the books \cite{BP, FK} for background on Hausdorff and box dimensions, and \cite{Fr2} for Assouad type dimensions. Kleinian limit sets will be subsets of the $d$-dimensional sphere $\mathbb{S}^d$ which we  view as a subset of $\mathbb{R}^{d+1}$. Therefore, it is convenient to recall dimension theory in Euclidean space only. 

Let $F \subseteq \mathbb{R}^d$ be non-empty. We  write $\aso F$, $\low F$ and $\haus F$ to denote the Assouad, lower,  and Hausdorff dimension of $F$, respectively.  We also write $\ubox F$ and $\lbox F$ for the upper and lower box dimensions of $F$ and $\boxd F$ for the box dimension when it exists.  We refer the reader to \cite{BP, FK, Fr2} for the precise definitions since we do not use them directly. It is useful to keep in mind that, for compact $F$,
\[
\low F \leq \haus F \leq \lbox F \leq \ubox F \leq \aso F.
\]
We write 
\[
\size{F} = \sup_{x,y \in F} |x-y| \in [0,\infty]
\]
 to denote the diameter of  $F$.  Given $r>0$, we write $N_r(F)$ to denote the smallest number of balls of radius $r$ required to cover $F$. We write $M_r(F)$ to denote the largest cardinality of a packing of $F$ by balls of radius $r$ centred in $F$.  In what follows, it is easy to see that  replacing  $N_r(F)$   by $M_r(F)$  yields  equivalent definitions and so we sometimes switch between minimal coverings and maximal packings in our arguments. This is standard in fractal geometry.

The Assouad and lower spectra, introduced in \cite{FYu}, interpolate between the box dimensions and the Assouad and lower dimensions in a meaningful way.  They provide a parametrised family of dimensions by fixing the relationship between the two scales  used to define Assouad and lower dimension, see below.   Studying the dependence on the parameter within this family  thus yields finer and more nuanced information about the local structure of the set.  For example, one may understand which scales `witness' the behaviour described by the Assouad and lower dimensions.  For $\theta \in (0,1)$, the \textit{Assouad spectrum} of $F$ is given by
\begin{align*}
\asospec F = \inf \Bigg\{ s \geq 0 \mid  \exists C>0 \ : \ \forall \ 0<r<1 \  :  \ \forall x \in F \  :  \ 
N_r(B(x,r^{\theta}) \cap F) \leq C \left(\frac{r^{\theta}}{r} \right)^{s} \Bigg\} 
\end{align*}
and the \textit{lower spectrum} of $F$ by
\begin{align*}
\lowspec F = \sup \Bigg\{ s \geq 0 \mid  \exists C>0 \ : \ \forall \ 0<r<1 \  :  \ \forall x \in F \  :  \  
N_r(B(x,r^{\theta}) \cap F) \geq C \left(\frac{r^{\theta}}{r} \right)^{s} \Bigg\}. 
\end{align*}
If one replaces $r^\theta$ in the above definitions with a `free scale'  $R \in (r,1)$, then one recovers the Assouad and lower dimensions, respectively.  It was shown in \cite{FYu} that for a bounded set $F \subseteq \mathbb{R}^d$,
\begin{align}
\ubox F &\leq \asospec F \leq \min\left\{\aso F, \ \frac{\ubox F}{1-\theta}\right\} \label{basicbound}\\
\low F &\leq \lowspec F \leq \lbox F. \nonumber
\end{align}
In particular, $\asospec F \to \ubox F$ as $\theta \to 0$.  Whilst the analogous statement does not hold for the lower spectrum in general, it was proved in \cite[Theorem 6.3.1]{Fr2} that $\lowspec F \to \lbox F$ as $\theta \to 0$ provided $F$ satisfies a strong form of  dynamical invariance.   Whilst the fractals we study are not quite covered by this result, we shall see that this `interpolation' holds nevertheless.    The limits  $\lim_{\theta \to 1} \asospec F$ and $\lim_{\theta \to 1} \lowspec F$ are known to exist in general, but are not necessarily equal to the  Assouad and lower dimensions, respectively, although we shall see this will hold for the sets considered here.

There is an analogous dimension theory of measures, and the interplay between the dimension theory of fractals and the measures they support is fundamental to fractal geometry, especially in the dimension theory of dynamical systems. Let $\mu$ be a locally finite Borel measure on $\mathbb{R}^d$. Similar to above, we write $\aso \mu$, $\low \mu$ and $\haus \mu$ for the Assouad, lower and (lower) Hausdorff dimensions of  $\mu$, respectively.  For $\theta \in (0,1)$, the \textit{Assouad spectrum} of $\mu$ with support $F$ is given by
\begin{align*}
\asospec \mu = \inf \Bigg\{ s \geq 0 \mid  \exists C>0 \ : \ \forall \ 0<r< \vert F \vert \  :  \ \forall x \in F \  :  \  \frac{\mu(B(x,r^\theta))}{\mu(B(x,r))} \leq C \left(\frac{r^\theta}{r} \right)^{s} \Bigg\} 
\end{align*}
and, provided $|F| > 0$, the \textit{lower spectrum} of $\mu$ is given by
\begin{align*}
\lowspec \mu = \sup \Bigg\{ s \geq 0 \mid  \exists C>0 \ : \ \forall \ 0<r< \vert F \vert \  :  \ \forall x \in F \  :  \  \frac{\mu(B(x,r^\theta))}{\mu(B(x,r))} \geq C \left(\frac{r^\theta}{r} \right)^{s} \Bigg\} 
\end{align*}
and otherwise it is 0.  Once again, if one replaces $r^\theta$ in the above definitions with a free scale  $R \in (r,1)$, then one recovers the Assouad and lower dimensions of $\mu$, respectively.  The Assouad and lower dimensions of measures  were introduced in \cite{Ka2}, where they were referred to as the upper and lower regularity dimensions, respectively. It is known (see \cite{FFK} for example) that
\[
\low \mu \leq \lowspec \mu \leq \asospec \mu \leq \aso \mu
\]
 and, if $\mu$ has support equal to a closed set $F$, then 
\[
\lowspec \mu \leq \lowspec F \leq \asospec F \leq \asospec \mu.
\]
 The \textit{upper box dimension} of $\mu$  with support $F$ is given by
\begin{align*}
\ubox \mu = \inf \Big\{ s \mid  \exists C>0 \ : \ \forall \ 0<r< \vert F \vert \  :  \ \forall x \in F \  :  \  \mu(B(x,r)) \geq Cr^{s} \Big\} 
\end{align*}
and the \textit{lower box dimension} of $\mu$ is given by
\begin{align*}
\lbox \mu = \inf \Big\{ s \mid \exists  C>0 \  : \ \forall  r_0>0 \ : \  \exists \ 0<r<r_0 \ : \ \forall \ x\in F \  :  \  \mu(B(x,r)) \geq Cr^{s} \Big\}.  
\end{align*}
If $\ubox \mu = \lbox \mu$, then we refer to the common value as the \textit{box dimension} of $\mu$, denoted by $\boxd \mu$. These definitions of the box dimension of a measure were introduced only recently  in \cite{FFK} where it was also shown that, for $\theta \in (0,1)$,
\[\ubox \mu \leq \asospec \mu \leq \min\left\{\aso \mu, \frac{\ubox \mu}{1-\theta}\right\}\]
and if $\mu$ has compact support $F$, then $\ubox F \leq \ubox \mu$.

\subsection{Kleinian groups and limit sets}
\label{KleinPrelims}
For a more thorough study of hyperbolic geometry and Kleinian groups, we refer the reader to \cite{A, B, M,  mcmullen1}. For $d \geq 1$, we model $(d+1)$-dimensional hyperbolic space using the Poincar\'e ball model 
\[
\mathbb{D}^{d+1} = \{z \in \mathbb{R}^{d+1} \mid \vert z \vert < 1\}
\]
  equipped with the hyperbolic metric $d_{\mathbb{H}}$ 
and we call the boundary 
\[
\mathbb{S}^d = \{z \in \mathbb{R}^{d+1} \mid \vert z \vert = 1\}
\]
 the \textit{boundary at infinity} of the space $(\mathbb{D}^{d+1},d_{\mathbb{H}})$. We denote by $\text{Con}(d)$ the group of orientation-preserving isometries of  $(\mathbb{D}^{d+1},d_{\mathbb{H}})$. We will occasionally make use of the upper half space model $\mathbb{H}^{d+1} = \mathbb{R}^d \times (0,\infty)$ equipped with the analogous metric.

We say that a group is \textit{Kleinian} if it is a discrete subgroup of $\text{Con}(d)$,
and given a Kleinian group $\Gamma$, the \textit{limit set} of $\Gamma$ is defined to be  $\lset = \overline{\Gamma(\mathbf{0})} \setminus \Gamma(\mathbf{0})$ where $\mathbf{0} = (0,\dots,0) \in \mathbb{D}^{d+1}$. It is well known that $\lset$ is a compact $\Gamma$-invariant subset of $\mathbb{S}^d$. If $\lset$ contains zero, one or two points, it is said to be \textit{elementary}, and otherwise it is \textit{non-elementary}. In the non-elementary  case, $\lset$ is a perfect set, and often has a complicated fractal structure. We consider \emph{geometrically finite} Kleinian groups.  Roughly speaking, this means that there is a fundamental domain with finitely many sides (we refer the reader to \cite{BO} for further details). We define the \textit{Poincar\'e exponent} of a Kleinian group $\Gamma$ to be 
\[\delta = \inf\left\{s>0 \mid \sum_{g \in \Gamma} e^{-s \hdist{\mathbf{0}}{g(\mathbf{0})}} < \infty \right\}.\]
Due to work of  Patterson and Sullivan \cite{Pa1, S1}, it is known that for a non-elementary geometrically finite Kleinian group $\Gamma$, the Hausdorff dimension of the limit set is equal to $\delta$. It was discovered independently by Bishop and Jones \cite[Corollary 1.5]{BJ} and Stratmann and Urba\'nski \cite[Theorem 3]{SU3} that the box and packing dimensions of the limit set are also equal to $\delta$.  Even in the non-elementary geometrically \emph{infinite} case, $\delta$ is still an important quantity.  In fact it  always gives the Hausdorff dimension of the \emph{radial} limit set, and therefore also provides a lower bound for the Hausdorff dimension of the limit set, see \cite{BJ}.  In general, $\delta$ can be difficult to compute or estimate, but there are various techniques available, see \cite{jenkinson, mcmullen3}.

From now on we only discuss the case of non-elementary geometrically finite $\Gamma$. We write  $\mu$ to denote the associated  \textit{Patterson-Sullivan measure}, which is a measure first constructed by Patterson in \cite{Pa1}.  Strictly speaking there  is a family of (mutually equivalent) Patterson-Sullivan measures.  However, we may fix one for simplicity (and hence talk about \emph{the} Patterson-Sullivan measure since the dimension theory is the same for each measure).  The geometry of $\Gamma$, $\lset$ and $\mu$ are heavily related.  For example,  $\mu$ is a $\delta$-conformal $\Gamma$-ergodic Borel probability measure with support  $\lset$.  Moreover, $\mu$ has Hausdorff, packing and entropy dimension equal to $\delta$, see \cite{SV}.  The limit set is $\Gamma$-invariant in the strong sense that $g(\lset) = \lset$ for all $g \in \Gamma$. However, $\mu$ is only quasi-invariant and $\mu \circ g$ is related to $\mu$ by a geometric transition rule, see \cite[Chapter 14]{borth} for a more detailed exposition of this. 

The Assouad and lower dimensions of $\mu$ and $\lset$  were dealt with in \cite{Fr1}. To state the results, we require some more notation.  Suppose  $\Gamma$   contains at least one parabolic point, and denote by $P \subseteq \lset$ the countable set of parabolic fixed  points. We may fix a standard set of horoballs $\{H_p\}_{p \in P}$ (a horoball $H_p$ is a closed Euclidean ball whose interior lies in $\mathbb{D}^{d+1}$ and is tangent to the boundary $\mathbb{S}^{d}$ at $p$) such that they are pairwise disjoint, do not contain the point $\mathbf{0}$, and have the property that for each $g \in \Gamma$ and $p \in P$, we have $g(H_p) = H_{g(p)}$, see \cite{SU3, SV}.

We note that, for any $p \in P$, the stabiliser of $p$ denoted by $\text{Stab}(p)$ cannot contain any loxodromic elements, as this would violate the discreteness of $\Gamma$. We denote by $k(p)$ the maximal rank of a free abelian subgroup of $\text{Stab}(p)$, which must be generated by $k(p)$ parabolic elements which all fix $p$, and call this the \textit{rank} of $p$. We write 
\begin{align*}
\kmin &= \min\{k(p) \mid p \in P\}\\
\kmax &= \max\{k(p) \mid p \in P\}.
\end{align*}
 It was proven in \cite{S1} that $\delta > \kmax/2$. In \cite{Fr1}, the following was proven:
\begin{thm} 
\label{fraserthm}
Let $\Gamma$ be a non-elementary geometrically finite Kleinian group. Then
\begin{align*}
\normalfont{\aso} \lset &= \max\{\delta,k_{\max}\}\\
\normalfont{\low} \lset &= \min\{\delta,k_{\min}\}\\
\normalfont{\aso} \mu &= \max\{2\delta-k_{\min},k_{\max}\}\\
\normalfont{\low} \mu &= \min\{2\delta-k_{\max},k_{\min}\}.
\end{align*}
\end{thm}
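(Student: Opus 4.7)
The central input would be the global measure formula of Stratmann and Velani, which controls $\mu(B(\xi,r))$ in terms of the interaction of the hyperbolic geodesic from $\mathbf{0}$ to $\xi$ with the horoball system $\{H_p\}_{p \in P}$. Writing $\xi_r$ for the point on this geodesic at hyperbolic distance $-\log r$ from $\mathbf{0}$, the formula asserts $\mu(B(\xi,r)) \asymp r^\delta$ whenever $\xi_r$ lies outside every horoball, and $\mu(B(\xi,r)) \asymp r^\delta e^{(k(p)-\delta)\Delta}$ whenever $\xi_r$ lies inside $H_p$ to hyperbolic depth $\Delta$. In particular $\mu(B(p,r)) \asymp r^{2\delta-k(p)}$ at a parabolic fixed point. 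Combined with the standard comparison $N_r(B(\xi,R) \cap \lset) \lesssim \mu(B(\xi, 2R)) / \inf_{\xi' \in B(\xi, R) \cap \lset} \mu(B(\xi', r))$ and its packing dual, this translates every geometric question about $\lset$ and $\mu$ into bookkeeping along the geodesic.

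For the set dimensions, the lower bound $\aso \lset \geq \delta$ is immediate from $\haus \lset = \delta$, while $\aso \lset \geq \kmax$ comes from a direct packing argument near a parabolic $p \in P$ of maximal rank: placing $p$ at infinity in $\mathbb{H}^{d+1}$, the subgroup $\text{Stab}(p)$ acts cocompactly by translations on a $\kmax$-dimensional subspace, so its orbits yield order $(R/r)^{\kmax}$ disjoint $r$-balls within $B(p,R) \cap \lset$. The matching upper bound $\aso \lset \leq \max\{\delta, \kmax\}$ is obtained via a case analysis on the positions of $\xi_r, \xi_R$ with respect to the horoballs, invoking the global measure formula to bound covering numbers in each case. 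The dual value $\low \lset = \min\{\delta, \kmin\}$ is obtained analogously, with Bowditch's description of the geometry near a parabolic fixed point being an essential additional input for the upper bound $\low \lset \leq \kmin$: it provides a quantitative fundamental domain for $\text{Stab}(p)$ at a rank-$\kmin$ cusp from which one extracts a genuinely sparse region of $\lset$ at matching scales.

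For the measure dimensions, testing at a parabolic fixed point $p$ of rank $\kmin$ (respectively $\kmax$) gives $\mu(B(p,R))/\mu(B(p,r)) \asymp (R/r)^{2\delta - k(p)}$ and so $\aso \mu \geq 2\delta - \kmin$ (respectively $\low \mu \leq 2\delta - \kmax$). The contributions $\kmax$ in $\aso \mu$ and $\kmin$ in $\low \mu$ arise from configurations where $\xi$ sits in the shadow of a rank-$\kmax$ (respectively rank-$\kmin$) horoball and the scales $r,R$ are chosen so that $\xi_r$ and $\xi_R$ straddle the horoball at optimised depth; the exponential factor in the global measure formula then provides the required exponent. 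The matching upper (respectively lower) bound requires showing that every other joint configuration of $\xi_r, \xi_R$ in the horoball system yields a ratio subordinate to one of the two extremes already identified, which is a lengthy but fundamentally routine case analysis.

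The main obstacle, to my mind, is the upper bound for $\low \lset$: unlike in the Assouad case, one cannot simply invoke $\lbox \lset = \delta$ or a parabolic-point computation, and a genuinely geometric construction of a sparse portion of $\lset$ inside a rank-$\kmin$ cusp is needed. This is where Bowditch's theorem enters, and it has no counterpart in the Assouad case. Once these inputs are in place, the remainder of the argument reduces to the case analysis via the global measure formula.
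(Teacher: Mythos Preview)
This theorem is not actually proved in the paper: it is stated as background, with the proof attributed to Fraser's earlier work \cite{Fr1} (the sentence immediately preceding the theorem reads ``In \cite{Fr1}, the following was proven:''). So there is no proof in this paper to compare your proposal against directly.

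That said, the present paper does adapt and extend the arguments of \cite{Fr1} to the spectrum setting (Sections~\ref{AsospecLimit} and~\ref{LowspecLimit}), and from those adaptations one can infer the shape of the original proof. Your sketch matches that shape well: the global measure formula (Theorem~\ref{Global}) is indeed the central tool; the lower bound $\aso\lset \geq \kmax$ is obtained via the bi-Lipschitz copy of an inverted $\mathbb{Z}^{\kmax}$ lattice inside $\lset$ (cf.\ \eqref{lattice}); the upper bound $\aso\lset \leq \max\{\delta,\kmax\}$ proceeds by decomposing a maximal packing according to horoball positions (the sets $X_0, X_1, X_n$ in Section~\ref{AsospecLimit2}); and Bowditch's description of bounded parabolic points (Lemma~\ref{Bow}) is exactly the ingredient used for the upper bound on the lower dimension. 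Your identification of the $\low\lset$ upper bound as the genuinely delicate step, requiring Bowditch's theorem with no counterpart on the Assouad side, is also correct and is borne out by the structure of Section~\ref{LowspecLimit}. The measure-dimension sketch is likewise consistent with the case analysis carried out for the spectra in Sections~\ref{AsospecMu} and~\ref{LowspecMu}.

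In short: your proposal is a faithful outline of the approach in \cite{Fr1}, but the paper itself simply cites that result rather than reproving it.
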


We will rely on Stratmann and Velani's \textit{global measure formula} \cite{SV} which gives a formula for the measure of any ball centred in the limit set up to uniform constants. More precisely, given $z \in \lset$ and $T>0$, we define $z_T \in \mathbb{D}^{d+1}$ to be the point on the geodesic ray joining $\mathbf{0}$ and $z$ which is hyperbolic distance $T$ from $\mathbf{0}$. We write $S(z,T) \subset \mathbb{S}^d$ to denote the \textit{shadow at infinity} of the $d$-dimensional hyperplane passing through $z_T$ which is normal to the geodesic joining $\mathbf{0}$ and $z$. The global measure formula states that 
\begin{equation}
\label{GlobThm}
 \mu(S(z,T)) \approx e^{-T\delta}e^{-\rho(z,T)(\delta-k(z,T))}
\end{equation}
where $k(z,T) = k(p)$ if $z_T \in H_p$ for some $p \in P$ and 0 otherwise, and \[\rho(z,T) = \inf\{\hdist{z_T}{y} \mid y \notin H_p \}\] if $z_T \in H_p$ for some $p \in P$ and 0 otherwise.  Basic hyperbolic geometry shows that $S(z,T)$ is a Euclidean ball centred at $z$ with radius comparable to $e^{-T}$, and so an immediate consequence of (\ref{GlobThm}) is the following.
\begin{thm}[Global Measure Formula]
\label{Global}
Let $z \in \lset$, $T>0$. Then we have
\[\muball{z}{T} \approx  e^{-T\delta}e^{-\rho(z,T)(\delta-k(z,T))}.\]
\end{thm}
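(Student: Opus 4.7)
The plan is to derive the Global Measure Formula directly from equation (2.1), which gives the same asymptotic for the shadow $\mu(S(z,T))$, by showing that the shadow and the ball $B(z,e^{-T})$ are comparable geometric objects. The paper already asserts, as a consequence of basic hyperbolic geometry, that $S(z,T)$ is a Euclidean ball centred at $z$ of radius comparable to $e^{-T}$. Thus there are universal constants $c_1,c_2>0$ (depending only on the ball model, not on $z$ or $T$) with $c_1 e^{-T}\leq \operatorname{rad}(S(z,T))\leq c_2 e^{-T}$, and hence there exist constants $a<0<b$ such that
\[
S(z,T+b)\ \subseteq\ B(z,e^{-T})\ \subseteq\ S(z,T+a)
\]
for all sufficiently large $T$. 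Monotonicity of $\mu$ together with (2.1) then squeezes $\mu(B(z,e^{-T}))$ between the two shadow estimates at the shifted parameters $T+a$ and $T+b$.

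The remaining work is to show that the expression $e^{-T\delta}e^{-\rho(z,T)(\delta-k(z,T))}$ is stable under bounded shifts of the $T$ parameter. The prefactor $e^{-T\delta}$ changes only by the multiplicative constants $e^{-a\delta},e^{-b\delta}$, which I would absorb into the implicit constants of $\approx$. For the second exponential factor, the main observation is that the function $T\mapsto \rho(z,T)$ is $1$-Lipschitz: along the unit-speed geodesic ray from $\mathbf{0}$ to $z$, the distance to the nearest horospherical boundary can change by at most the hyperbolic distance $|T-T'|$ travelled. Moreover, whenever $z_T$ and $z_{T'}$ lie in distinct horoballs (or one lies outside $\bigcup_p H_p$), the pairwise disjointness of the horoballs forces $\rho(z,T)$ and $\rho(z,T')$ both to be at most $|T-T'|$. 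Since $|a|,|b|$ are fixed and $k(z,T)$ ranges over the finite set $\{0\}\cup\{k(p):p\in P\}$, the quantity $\rho(z,T')(\delta-k(z,T'))$ differs from $\rho(z,T)(\delta-k(z,T))$ by at most a constant, so their exponentials are comparable.

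The case of bounded $T$ (say $T$ at most some fixed constant) is handled separately: then $B(z,e^{-T})$ has radius bounded below, and as $\mu$ is a probability measure with support $\lset$ and $z\in \lset$, the measure $\mu(B(z,e^{-T}))$ is bounded both above and below by absolute positive constants, and similarly the right-hand side of the claimed estimate is uniformly bounded. So this regime contributes only to the multiplicative constants.

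The only place I expect any genuine care to be required is in the bookkeeping around the parabolic set: one must check that the possible jumps in $k(z,T)$ under a bounded shift of $T$ are harmless, which is ensured by the uniform disjointness of the standard horoballs and the fact that $|\delta-k(p)|$ is bounded uniformly over $p\in P$ (as $k(p)\leq \kmax\leq d$ and $\delta$ is fixed). Once this is observed, the result falls out as promised as an immediate corollary of (2.1).
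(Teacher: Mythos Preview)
Your proposal is correct and follows exactly the route the paper indicates: the paper does not give a proof of Theorem~\ref{Global} at all, but simply states that since $S(z,T)$ is a Euclidean ball centred at $z$ of radius comparable to $e^{-T}$, the theorem is ``an immediate consequence of (\ref{GlobThm}).'' Your write-up is a careful unpacking of that one line, supplying the Lipschitz control on $\rho(z,\cdot)$ and the horoball-disjointness argument needed to make the bounded shift in $T$ harmless; nothing more is required and your bookkeeping is sound.
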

An easy consequence of Theorem \ref{Global} is that if $\Gamma$ contains no parabolic points, then \[\aso \lset = \low \lset = \aso \mu = \low \mu = \boxd \mu= \delta,\] 
and
\[
\asospec \lset = \asospec \mu = \lowspec \lset = \lowspec \mu  = \delta
\]
for all $\theta \in (0,1)$.  Therefore, we  assume  throughout that $\Gamma$ contains at least one parabolic point.

\section{Results}\label{KleinResults}
We assume throughout that  $\Gamma < \normalfont{\text{Con}}(d)$ is a non-elementary geometrically finite Kleinian group containing at least one parabolic element, and write $\lset$ to denote the associated limit set and $\mu$ to denote the associated Patterson-Sullivan measure.    Our first result gives formulae for the Assouad and lower spectra of $\mu$, as well as the box dimension of $\mu$.
\begin{thm}\label{specmu} Let $\theta \in (0,1).$ Then

\emph{i)}  $\normalfont{\boxd} \mu = \max\{\delta,2\delta-\kmin\}$.

\emph{ii)} If $\delta < \kmin$, then 
\[\normalfont{\asospec} \mu = \delta + \min\left\{1,\frac{\theta}{1-\theta}\right\}(\kmax-\delta),\]

if $\kmin \leq \delta < (\kmin+\kmax)/2$, then  
\[\normalfont{\asospec} \mu = 2\delta -\kmin + \min\left\{1,\frac{\theta}{1-\theta}\right\}(\kmin+\kmax-2\delta)\]

and if $\delta \geq (\kmin+\kmax)/2$, then $\normalfont{\asospec} \mu = 2\delta-\kmin.$

\emph{iii)} If $\delta > \kmax$, then 
\[\normalfont{\lowspec} \mu = \delta - \min\left\{1,\frac{\theta}{1-\theta}\right\}(\delta-\kmin),\]

if $(\kmin+\kmax)/2 < \delta \leq \kmax$, then 
\[\normalfont{\lowspec} \mu = 2\delta-\kmax - \min\left\{1,\frac{\theta}{1-\theta}\right\}(2\delta-\kmin-\kmax)\]

and if $\delta \leq (\kmin+\kmax)/2$, then  $\normalfont{\lowspec} \mu  = 2\delta-\kmax.$
\end{thm}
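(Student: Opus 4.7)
The plan is to reduce all three parts to a single optimisation problem governed by the Global Measure Formula (Theorem~\ref{Global}). For $x \in L(\Gamma)$ and $T > 0$, set $r = e^{-T}$ and abbreviate $\rho_i = \rho(x, \theta_i T)$ and $k_i = k(x, \theta_i T)$ with $\theta_1 = 1$ and $\theta_2 = \theta$. Theorem~\ref{Global} yields
\[
\frac{\mu(B(x, r^\theta))}{\mu(B(x, r))} \;\approx\; r^{(\theta - 1)\delta} \cdot r^{[\rho_2(\delta - k_2) - \rho_1(\delta - k_1)]/T},
\]
so, up to additive corrections of order $1/T$, the Assouad and lower spectra of $\mu$ are the supremum and infimum over admissible $(x, T)$ of
\[
\Phi(x, T) \;:=\; \delta + \frac{\rho_1(\delta - k_1) - \rho_2(\delta - k_2)}{(1-\theta)T}.
\]
Part (i) corresponds to the single-scale analogue $\delta + (\rho_1/T)(\delta - k_1)$.

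For part (i), since $\rho_1/T \in [0, 1]$ and $\delta - k_1$ is maximised at $k_1 = \kmin$ precisely when $\kmin < \delta$, the single-scale expression is bounded uniformly by $\max\{\delta, 2\delta - \kmin\}$, giving the upper bound on $\ubox \mu$. For the matching lower bound on $\lbox \mu$, if $\delta \geq \kmin$ take $x$ to be a parabolic fixed point of rank $\kmin$ and send $T \to \infty$ so that $\rho_1/T \to 1$ and hence $\mu(B(x, r)) \approx r^{2\delta - \kmin}$; if $\delta < \kmin$ use that $\mu(B(x, r)) \approx r^{\delta}$ at any scale where $x_T$ lies outside every horoball.

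For parts (ii) and (iii), the task is to maximise (respectively minimise) $\Phi$ subject to the hyperbolic-geometric constraints on the quadruple $(\rho_1, k_1, \rho_2, k_2)$ along the geodesic from $\mathbf{0}$ to $x$: $\rho_2 \leq \theta T$; inside a single horoball $|\rho_1 - \rho_2| \leq (1-\theta) T$; and if $x_T, x_{\theta T}$ lie in distinct horoballs then $\rho_1 + \rho_2 \leq (1-\theta) T$, since the geodesic must exit one horoball and enter the other in between. The \emph{two-horoball} configuration, which is invisible to the Assouad and lower dimensions, drives the middle subcase in each formula and captures the ``interplay between horoballs of different rank'' advertised in the abstract. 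For (ii), a small linear-programming problem in $(\rho_1, \rho_2)$ with coefficients $\delta - k_1$ and $k_2 - \delta$ yields three regimes. When $\delta < \kmin$ only $\rho_2$ carries a positive coefficient and the optimum is a single rank-$\kmax$ horoball visited at time $\theta T$. When $\kmin \leq \delta < (\kmin + \kmax)/2$ the $\rho_2$ coefficient still dominates and the optimum is the two-horoball configuration with $(k_1, k_2) = (\kmin, \kmax)$. When $\delta \geq (\kmin + \kmax)/2$ the $\rho_1$ coefficient wins and the optimum is a single rank-$\kmin$ horoball containing both scales, producing the plateau $2\delta - \kmin$. The factor $\min\{1, \theta/(1-\theta)\}$ in the answer encodes the transition between the binding constraint $\rho_2 \leq \theta T$ (for $\theta \leq \tfrac{1}{2}$) and the constraint $\rho_1 + \rho_2 \leq (1-\theta) T$ (for $\theta \geq \tfrac{1}{2}$). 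Part (iii) is the exact mirror image: replace $\sup$ by $\inf$ and swap $\kmin \leftrightarrow \kmax$ throughout.

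The main obstacle is the \emph{realisation} step. The case analysis above gives uniform control on $\Phi$ and hence the upper bound for $\asospec \mu$ and the lower bound for $\lowspec \mu$. The matching inequalities require producing limit points $x \in L(\Gamma)$ whose geodesic rays actually realise the extremal horoball profiles along a sequence $T_n \to \infty$. For the Assouad spectrum this can be done using density of parabolic orbits together with the $\Gamma$-quasi-invariance of $\mu$: approximating a rank-$\kmax$ parabolic fixed point by $\Gamma$-translates of a rank-$\kmin$ parabolic fixed point (or vice versa) produces geodesics threading the two required horoballs at the required scales. For the lower spectrum the harder task is to rule out configurations whose $\Phi(x, T)$ dips below the predicted value, and this is where Bowditch's theorem on the geometry of $L(\Gamma)$ in a standard cusp neighbourhood enters: it guarantees that limit points near a rank-$k$ parabolic point follow only the admissible depth profiles identified above, so that the infimum is attained at the configurations found by the linear-programming analysis.
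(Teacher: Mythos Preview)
Your overall strategy is essentially that of the paper: reduce via the Global Measure Formula to the optimisation of $\Phi(x,T)$, extract the uniform bounds (upper for $\asospec\mu$, lower for $\lowspec\mu$) from the geometric constraints on $(\rho_1,\rho_2)$---your three inequalities are exactly the paper's (\ref{escape1}), (\ref{escape2}), (\ref{escape3})---and realise the extremal profiles by iterating a parabolic element to drag a parabolic fixed point of one rank towards one of another rank. Part (i) and the linear-programming analysis for the uniform directions of (ii) and (iii) are correct and mirror the paper's case analysis closely.

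Your final paragraph, however, contains a misattribution and some confusion about the lower spectrum. Bowditch's theorem is \emph{not} used anywhere in the paper's proof of Theorem~\ref{specmu}; it enters only for the lower spectrum of the \emph{set} $L(\Gamma)$ in Theorem~\ref{specset}, where one must bound covering numbers rather than measure ratios. For $\lowspec\mu$, both directions follow from the Global Measure Formula and your stated constraints alone. The lower bound \emph{is} the linear-programming infimum you already claimed to have handled; the ``matching inequality'' is the \emph{upper} bound, and this is a realisation step exactly parallel to the Assouad case: take $z=f^n(p')$ with $f$ fixing a rank-$\kmin$ parabolic point and $k(p')=\kmax$, and choose $T$ so that $z_{\theta T}$ sits deep in $H_p$ while $z_T$ sits deep in $H_{f^n(p')}$. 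Your sentence ``the harder task is to rule out configurations whose $\Phi(x,T)$ dips below the predicted value'' describes the lower bound, not the matching inequality, so the paragraph is internally inconsistent. No structural input about cusp neighbourhoods beyond disjointness of the standard horoballs is required for the measure.
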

We prove Theorem \ref{specmu} in Sections \ref{BoxMu} - \ref{LowspecMu}.  It is perhaps noteworthy that  $\kmin$ and $\kmax$ show up \emph{simultaneously} in the formulae for $\asospec \mu$ and $\lowspec \mu$  in the `intermediate range' of  $\delta$.  This does not happen for $\aso \mu$ and $\low \mu$ (see Theorem \ref{fraserthm}) and is indicative of subtle interplay between horoballs of different rank  detected by the Assouad and lower spectra.   The next theorem provides formulae for the Assouad and lower spectra of $\lset$.

\begin{thm}\label{specset} Let $\theta \in (0,1).$ 

\emph{i)} If  $\delta < \kmax$, then 
\[\normalfont{\asospec} \lset = \delta + \min\left\{1,\frac{\theta}{1-\theta}\right\}(\kmax-\delta)\]

and if $\delta \geq \kmax$, then  $\normalfont{\asospec} \lset = \delta.$

\emph{ii)} If $\delta \leq \kmin$, then  $\normalfont{\lowspec} \lset = \delta$, and if $\delta > \kmin$, then 
\[\normalfont{\lowspec} \lset =  \delta - \min\left\{1,\frac{\theta}{1-\theta}\right\}(\delta-\kmin).\]
\end{thm}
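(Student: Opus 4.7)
The statement splits into two parts, each with a ``trivial'' and a ``nontrivial'' regime. If $\delta \geq \kmax$ then Theorem \ref{fraserthm} gives $\aso \lset = \delta$, and since $\ubox \lset = \delta$ the chain \eqref{basicbound} forces $\asospec \lset = \delta$ for every $\theta$; the case $\delta \leq \kmin$ of part (ii) is analogous, using $\low \lset = \delta$ together with $\lowspec \lset \leq \lbox \lset = \delta$. These regimes require no new input.

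For the nontrivial regime $\delta < \kmax$ in part (i), I plan to locate the extremal behaviour at a parabolic fixed point $p$ of maximal rank $\kmax$. The global measure formula (Theorem \ref{Global}) gives $\mu(B(p, r^\theta)) \approx r^{\theta(2\delta - \kmax)}$, while for points $y \in \lset$ whose geodesic ray from $\mathbf{0}$ does not deep-enter any horoball at scale $r$ the same formula gives $\mu(B(y, r)) \lesssim r^\delta$. Packing the ``generic'' part of $B(p, r^\theta) \cap \lset$ by $r$-balls then yields
\[
N_r(B(p, r^\theta) \cap \lset) \gtrsim r^{\theta(2\delta - \kmax) - \delta},
\]
which rearranges to the claimed exponent $\delta + (\theta/(1-\theta))(\kmax - \delta)$ when $\theta \leq 1/2$. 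Once this saturates at $\kmax$ (at $\theta = 1/2$, using Sullivan's bound $\delta > \kmax/2$), the inequality $\asospec \lset \leq \aso \lset = \kmax$ from Theorem \ref{fraserthm} caps the value at $\kmax$ for all larger $\theta$. For the matching upper bound, which must be uniform in $x \in \lset$, I would partition a packing of $B(x, r^\theta) \cap \lset$ according to proximity to Stratmann--Velani's standard horoball decomposition and apply the global measure formula on each piece, verifying that the resulting contribution is optimised when $x$ is a parabolic of rank $\kmax$.

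Part (ii) follows the same template dually, with $\kmin$ replacing $\kmax$: the upper bound on $\lowspec \lset$ is witnessed at a rank-$\kmin$ parabolic where the smallness of $\mu(B(p, r^\theta)) \approx r^{\theta(2\delta - \kmin)}$ forces a small covering number. The main obstacle lies in the companion uniform lower bound $N_r(B(x, r^\theta) \cap \lset) \gtrsim (r^\theta/r)^s$: when $x$ sits close to, but not at, a parabolic fixed point of rank strictly greater than $\kmin$, the ball $B(x, r^\theta)$ might a priori appear very ``sparse'' in $\lset$. This is precisely where Bowditch's theorem on the geometry near parabolic fixed points becomes essential --- the uniform bound it gives on cusp neighbourhoods lets us reduce any such near-parabolic centre to the generic situation, yielding the claimed exponent $\delta - (\theta/(1-\theta))(\delta - \kmin)$, which saturates at $\kmin$ once $\theta \geq 1/2$.
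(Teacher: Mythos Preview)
Your handling of the trivial regimes is correct and matches the paper exactly.

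For part (i) in the range $\delta<\kmax$, your sketch for the upper bound (partition a maximal packing according to horoball incidence) is essentially what the paper does via the decomposition $X=X_0\cup X_1\cup\bigcup_{n\geq 2}X_n$, and is fine at the level of a plan. Your lower bound argument, however, is different from the paper and is not complete as stated. You write that $\mu(B(p,r^\theta))\approx r^{\theta(2\delta-\kmax)}$ together with $\mu(B(y,r))\lesssim r^\delta$ for ``generic'' $y$ gives the exponent; but in any minimal $r$-cover of $B(p,r^\theta)\cap\lset$ there will be centres $y_i$ deep inside rank-$\kmax$ horoballs with $\mu(B(y_i,r))\approx r^\delta e^{\rho(y_i,T)(\kmax-\delta)}$, which can be much larger than $r^\delta$, so the division step $|Y|\geq\mu(B(p,r^\theta))/\max_i\mu(B(y_i,r))$ collapses. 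You need an independent reason why the generic part of the cover is already large. The paper does not argue this way at all: it exhibits an inverted $\mathbb{Z}^{\kmax}$-lattice inside $\lset$ near $p$ (via the parabolic stabiliser), uses this to show $\rho=1/2$ and, in the range $\delta\leq\kmin$, invokes the interpolation result \cite[Theorem 3.4.8]{Fr2}; in the range $\kmin<\delta<\kmax$ it sums, over the lattice points $\phi(z)$, lower bounds for $N_{e^{-T}}(B(\phi(z),C/k^2)\cap\lset)$ obtained from the $Y_0,Y_1$ argument of \cite{Fr1}.

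The more serious issue is in part (ii). You have the role of Bowditch's theorem exactly reversed. The uniform \emph{lower} bound $N_r(B(x,r^\theta)\cap\lset)\gtrsim (r^\theta/r)^s$ is proved in the paper without Bowditch, by the same $Y_0,Y_1$ covering decomposition and the global measure formula (together with Lemmas \ref{Squeeze} and \ref{CountHoro} to show the deep-horoball part $Y_1\setminus Y_1^0$ carries negligible measure). Bowditch's theorem is used for the \emph{upper} bound on $\lowspec\lset$: near a rank-$\kmin$ parabolic $p$ the limit set sits in a thickened $\kmin$-plane (Lemma \ref{Bow}), which lets one first cover $B(p,e^{-T\theta})\cap\lset$ by $\approx e^{T\theta\kmin}$ balls of radius $e^{-2T\theta}$ and then cover each of these at scale $e^{-T}$. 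Your proposed route to the upper bound --- ``smallness of $\mu(B(p,r^\theta))$ forces a small covering number'' --- does not work: for a packing $\{x_i\}$ one would need a uniform \emph{lower} bound on $\mu(B(x_i,r))$, but when $\delta>\kmin$ any $x_i$ deep inside a rank-$\kmin$ horoball has $\mu(B(x_i,r))\approx r^\delta e^{-\rho(x_i,T)(\delta-\kmin)}$, which can be arbitrarily small, so no useful cardinality bound follows. The geometric input from Bowditch (containment in a $\kmin$-dimensional slab) is what replaces this failed measure argument.
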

We prove Theorem \ref{specset} in Sections \ref{AsospecLimit} and \ref{LowspecLimit}.

\section{Proofs}\label{KleinProofs}
\subsection{Preliminaries}\label{KleinPre}
As many of the proofs in the Kleinian setting are reliant on horoballs, we first  establish various estimates describing the geometry of horoballs, including the `escape functions' $\rho(z,T)$. We start with a simple lemma. One should think of the circle involved as a 2-dimensional slice of a horoball.
\begin{lem}[Circle Lemma]\label{Circle}
Let $R>0$, and consider a circle centred at $(0,R)$ with radius $R$, parametrised by $x(\theta) = R\emph{sin}\theta$ and $y(\theta)=R(1-\emph{cos}\theta)$ $(0 \leq \theta < 2\pi)$. 
For sufficiently small $\theta$, we have
\begin{align}
\frac{\sqrt{Ry(\theta)}}{2} \leq x(\theta) &\leq 2\sqrt{Ry(\theta)} \label{taylor1}\\ 
 \frac{(x(\theta))^2}{4R} \leq y(\theta) &\leq \frac{4(x(\theta))^2}{R}. \label{taylor2} 
\end{align}
\end{lem}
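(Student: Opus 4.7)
The plan is to observe that inequalities (\ref{taylor1}) and (\ref{taylor2}) are algebraically equivalent: squaring (\ref{taylor1}) and dividing through by $R$ gives (\ref{taylor2}), and conversely taking square roots of (\ref{taylor2}) and multiplying by $\sqrt{R}$ recovers (\ref{taylor1}). So it suffices to establish either one, and I will target (\ref{taylor2}) since the quantities involved are simpler to manipulate.

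The key observation is the identity
\[
\frac{(x(\theta))^2}{R y(\theta)} \; = \; \frac{R^2 \sin^2\theta}{R^2(1-\cos\theta)} \; = \; \frac{(1-\cos\theta)(1+\cos\theta)}{1-\cos\theta} \; = \; 1+\cos\theta,
\]
valid whenever $\theta \not\equiv 0 \pmod{2\pi}$ (the case $\theta = 0$ being trivial since both sides of (\ref{taylor2}) vanish). Thus the ratio $(x(\theta))^2/(Ry(\theta))$ depends only on $\theta$, not on $R$, and tends to $2$ as $\theta \to 0$. In particular, for $\theta$ sufficiently small (any bound of the form $|\theta| \leq \theta_0 < \pi$ would work, for instance $|\theta| \leq \pi/2$), we have $1+\cos\theta \in [1, 2] \subseteq [1/4, 4]$. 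Rearranging yields $\tfrac{1}{4} R y(\theta) \leq (x(\theta))^2 \leq 4 R y(\theta)$, which is exactly (\ref{taylor2}). Taking square roots (noting $x(\theta), y(\theta) \geq 0$ for $\theta \in [0, \pi]$) gives (\ref{taylor1}).

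There is essentially no obstacle here — the whole argument reduces to a single trigonometric identity and the continuity of $\cos$ near $0$. The only subtlety worth flagging is that the constants $1/2$ and $2$ in the statement are loose (the true limiting ratio is $\sqrt{2}$), which is why the hypothesis ``for sufficiently small $\theta$'' appears rather than an explicit range; this slack is presumably deliberate so that the lemma can be applied uniformly across the horoballs whose two-dimensional slices it models.
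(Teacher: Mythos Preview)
Your proof is correct, and in fact tighter than the paper's. You exploit the exact trigonometric identity
\[
\frac{(x(\theta))^2}{R\,y(\theta)} = \frac{\sin^2\theta}{1-\cos\theta} = 1+\cos\theta,
\]
which immediately pins the ratio to the interval $[1,2]$ for $\theta \in [0,\pi/2]$ and hence gives (\ref{taylor2}) (indeed with the sharper constants $1/2$ and $1$ in place of $1/4$ and $4$). The paper instead proceeds via separate Taylor-type bounds $R\theta/2 \leq x(\theta) \leq R\theta$ and $R\theta^2/4 \leq y(\theta) \leq R\theta^2$ for small $\theta$, and then combines these four inequalities to obtain (\ref{taylor1}) and (\ref{taylor2}). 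Your route is shorter and avoids any appeal to Taylor's theorem; the paper's route has the minor advantage of giving independent two-sided estimates on $x(\theta)$ and $y(\theta)$ in terms of $\theta$ itself, though these are not used elsewhere. Either way the lemma is elementary and the constants are immaterial for the horoball applications that follow.
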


\begin{proof}
By Taylor's Theorem we have $R\theta/2 \leq x(\theta) \leq R\theta$   and $R\theta^2/4 \leq y(\theta) \leq R\theta^2 $ for sufficiently small $\theta$.  Combining these estimates gives the result.
\end{proof}

We also require the following lemma to easily estimate the `escape function' at a parabolic fixed point.
\begin{lem}[Parabolic Centre Lemma]\label{ParaCentre}
Let $p \in \lset$ be a parabolic fixed point with associated standard horoball $H_p$. Then we have $\rho(p,T) \sim T$ as $T \rightarrow \infty$, and for sufficiently large $T>0$ we have $k(p,T) = k(p)$.
\end{lem}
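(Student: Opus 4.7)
The plan is to pass to the upper half space model $\mathbb{H}^{d+1}$ via an isometry $\phi \in \mathrm{Con}(d)$ sending $p$ to $\infty$. Since hyperbolic distances, geodesics and horoballs are preserved by such isometries, and since $H_p$ is a horoball based at $p$, its image $\phi(H_p)$ must be a horoball based at $\infty$, i.e.\ a set of the form $\{(x,y) \in \mathbb{R}^d \times (0,\infty) : y \geq c\}$ for some constant $c>0$. The geodesic ray from $\mathbf{0}$ to $p$ maps to the geodesic ray from $\phi(\mathbf{0}) = (x_0,y_0)$ to $\infty$, which is the vertical ray $\{(x_0,y) : y \geq y_0\}$ parametrised (at hyperbolic speed) by $t \mapsto (x_0, y_0 e^{t})$, so that $\phi(p_T) = (x_0, y_0 e^{T})$.

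For the claim about $k(p,T)$: once $T \geq \log(c/y_0)$ we have $y_0 e^T \geq c$, hence $\phi(p_T) \in \phi(H_p)$, hence $p_T \in H_p$. Since the horoballs $\{H_q\}_{q \in P}$ are pairwise disjoint, $p_T$ lies in no other horoball, so by definition $k(p,T) = k(p)$ for all sufficiently large $T$.

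For the claim $\rho(p,T) \sim T$: since $\rho(p,T)$ is the hyperbolic distance from $p_T$ to the complement of $H_p$, and the complement of a closed horoball is open, this equals the hyperbolic distance from $p_T$ to $\partial H_p$, and this is preserved under $\phi$. So I need to compute the hyperbolic distance from $(x_0, y_0 e^T)$ to the hyperplane $\{y=c\}$. Using the standard formula $\cosh d_\mathbb{H}((x_0,y_0e^T),(x,c)) = 1 + \tfrac{|x_0-x|^2 + (y_0e^T - c)^2}{2y_0 e^T c}$, the infimum over $x \in \mathbb{R}^d$ is attained at $x=x_0$, and the resulting distance along the vertical segment from $(x_0,c)$ to $(x_0,y_0e^T)$ is
\[
\int_c^{y_0e^T} \frac{dy}{y} = T + \log(y_0/c).
\]
Hence $\rho(p,T) = T + \log(y_0/c)$ for $T \geq \log(c/y_0)$, which gives $\rho(p,T)/T \to 1$ as $T \to \infty$, i.e.\ $\rho(p,T) \sim T$.

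The routine technicality is verifying that the vertical foot is the true minimiser on $\partial H_p$, which is immediate from the $\cosh$ distance formula above. No step is really an obstacle — the main conceptual move is the change of model so that the horoball becomes a half-space and the geodesic becomes vertical, after which everything reduces to a one-variable computation.
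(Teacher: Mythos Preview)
Your proof is correct and follows essentially the same approach as the paper: both identify the nearest point of $\partial H_p$ to $p_T$ as the entry point of the geodesic from $\mathbf{0}$ to $p$ into $H_p$, and then observe that the hyperbolic distance between two points on a geodesic is the difference of their arc-length parameters, giving $\rho(p,T)=T-S$ for a fixed $S>0$. The only difference is cosmetic: the paper stays in the ball model and calls the entry point the ``tip'' $p_S$, while you pass to the upper half space model so that the horoball becomes $\{y\geq c\}$ and the computation becomes explicit.
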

\begin{proof} Let $p_S$ be the `tip' of the horoball $H_p$, in other words the point on the horoball $H_p$ which lies on the geodesic joining \textbf{0} and $p$. It is obvious that $p_T \in H_p \iff T \geq S$, so for sufficiently large $T$ we have $k(p,T)=k(p)$. Also note that
\begin{align*}
1  \geq \frac{\rho(p,T)}{T} 
= \frac{\hdist{p_T}{p_S}}{T}
= \frac{T-S}{T}  \rightarrow 1 
\end{align*}
as $T \rightarrow \infty$,  as required.
\end{proof}

The formulae for $\asospec \mu$ and $\lowspec \mu$  sometimes involve both $\kmin$ and $\kmax$. Consequently,  we   need to consider two standard horoballs, where we drag one towards  the other. One could think of the following lemma as saying that the images of this horoball `fill in the gaps' under the fixed horoball. 

\begin{lem}[Horoball Radius Lemma]\label{HoroRadius}
Let $p,p' \in \lset$ be two parabolic fixed points with associated standard horoballs $H_p$ and $H_{p'}$ respectively, and let $f$ be a parabolic element which fixes $p$. Then for sufficiently large $n$,
\begin{align*}
\vert f^n(p') - p \vert \approx \frac{1}{n} \qquad \text{and} \qquad   \vert f^n(H_{p'}) \vert = \vert H_{f^n(p')} \vert \approx \frac{1}{n^2}.
\end{align*}
\end{lem}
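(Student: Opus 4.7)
The plan is to pass to the upper half-space model $\mathbb{H}^{d+1} = \mathbb{R}^d \times (0,\infty)$ by a M\"obius conjugation sending the parabolic fixed point $p$ to $\infty$. In this model the horoball $H_p$ becomes an upper half-space $\mathbb{R}^d \times [h,\infty)$, and the parabolic element $f$, since it fixes $p=\infty$ and preserves $\mathbb{H}^{d+1}$, restricts on the boundary $\mathbb{R}^d$ to a fixed-point-free Euclidean isometry $f \colon x \mapsto Ax + b$ with $A \in O(d)$. Meanwhile $H_{p'}$ becomes a Euclidean ball of some fixed radius $r_{p'}>0$ tangent to $\mathbb{R}^d$ at $p'$, so that $f^n(H_{p'})$ is the Euclidean ball of the same radius $r_{p'}$ tangent at $f^n(p')$.

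The first step is to show $|f^n(p')| \approx n$ in the Euclidean metric on $\mathbb{R}^d$. Since $A$ is orthogonal we have the orthogonal decomposition $\mathbb{R}^d = \ker(A-I) \oplus \mathrm{range}(A-I)$; write $b = b_0 + b_1$ accordingly and note that the absence of a fixed point for $f$ is exactly the condition $b_0 \ne 0$. A short induction gives
\[
f^n(x) = A^n x + n b_0 + (A^n - I) c,
\]
where $c$ satisfies $(A-I)c = b_1$. Because $A$ is orthogonal, both $|A^n x|$ and $|(A^n - I)c|$ stay uniformly bounded in $n$, which leaves $n b_0$ as the dominant term and delivers the desired estimate $|f^n(p')| \approx n$.

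The second step transfers these Euclidean estimates on $\partial \mathbb{H}^{d+1}$ back to the ball model using the inverse of the M\"obius conjugation $\iota$ (with $\iota(\infty) = p$). Standard M\"obius distortion estimates show that a point at Euclidean distance $R$ from a bounded reference region of $\mathbb{R}^d$ maps to a point on $\mathbb{S}^d$ at Euclidean distance $\approx 1/R$ from $p$, while a Euclidean ball of radius $r$ sitting at Euclidean distance $R$ from the same region maps to a Euclidean ball of diameter $\approx r/R^2$. Applying these scaling relations with $R \approx n$ and $r = r_{p'}$ fixed yields $|f^n(p') - p| \approx 1/n$ and $|f^n(H_{p'})| \approx 1/n^2$, completing the argument.

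The main obstacle will be verifying the M\"obius distortion estimates rigorously, uniformly in $n$, and checking that the normalisation of the standard horoballs $\{H_p\}_{p\in P}$ (pairwise disjoint, not containing $\mathbf{0}$, $\Gamma$-equivariant) indeed allows $r_{p'}$ to be treated as a fixed constant depending only on $p$ and $p'$. A secondary subtlety is that $f$ need not be a pure translation but may be a screw motion, which is what the orthogonal decomposition of $b$ above is designed to handle.
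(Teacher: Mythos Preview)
Your proposal is correct and takes a genuinely different route from the paper. The paper stays in the ball model throughout: for the first estimate it simply appeals to an ``inverted copy of $\mathbb{Z}$'' converging to $p$; for the upper bound on $|H_{f^n(p')}|$ it uses the pairwise disjointness of the standard horoballs together with the Circle Lemma (if the image horoballs were too large they would meet $H_p$); and for the lower bound it picks a point $u \neq p'$ on $H_{p'}$, observes that the orbit $f^n(u)$ moves along a horoball based at $p$ (since $f$ preserves such horoballs), and applies the Circle Lemma again to see that $f^n(u)$ sits at Euclidean height $\approx 1/n^2$ above $\mathbb{S}^d$, forcing $|H_{f^n(p')}| \gtrsim 1/n^2$.

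By contrast, you conjugate $p$ to $\infty$, compute $f^n$ explicitly as a Euclidean screw motion, and then pull back by a single M\"obius distortion estimate. This buys you two things: both claims fall out simultaneously from the same $R \mapsto 1/R$, $r \mapsto r/R^2$ scaling (no separate upper/lower bound arguments are needed), and your treatment of the possible rotational part $A$ via the orthogonal decomposition of $b$ makes rigorous exactly the step the paper waves at with ``inverted $\mathbb{Z}$''. The paper's approach, on the other hand, avoids any explicit change of model and extracts the upper bound purely from the combinatorial disjointness of the standard horoball family, which is a nice structural observation even if not strictly necessary here.
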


\begin{proof}
By considering an inverted copy of $\mathbb{Z}$ converging to $p$, one 
sees that $\vert f^n(p') - p \vert \approx {1}/{n}$, so we need only prove that $ \vert H_{f^n(p')} \vert \approx {1}/{n^2}$. Note that clearly $\vert H_{f^n(p')} \vert \lesssim {1}/{n^2}$, as otherwise the horoballs would eventually overlap with $H_p$, using Lemma \ref{Circle} and the fact that $\vert f^n(p') - p \vert \approx \frac{1}{n}$, contradicting the fact that our standard set of horoballs is chosen to be pairwise disjoint. For the lower bound, consider a point $u \neq p'$ on $H_{p'}$, and its images under the action of $f$. Let $v$ denote the `shadow at infinity' of $u$ (see Figure \ref{twohoro}), and note that for sufficiently large $n$, $\vert f^n(v) - p \vert \approx \frac{1}{n}$.  As $ f^n(u)$ lies on a horoball with base point $p$, we can use Lemma \ref{Circle} to deduce that $\vert f^n(u) - f^n(v) \vert \approx {1}/{n^2}$ (see Figure \ref{dashhoro}), and therefore $\vert H_{f^n(p')} \vert \gtrsim {1}/{n^2}$, as required.
\end{proof}

\begin{figure}[H]
\centering
\begin{tikzpicture}[scale=0.6]
\draw (5,5) circle (5cm);
\draw (5,2) circle (2cm);
\draw (2.27,6.83) circle (1.7cm);
\draw (1.24,3.42) circle (0.9cm);
\draw (1.87,1.7) circle (0.45cm);
\draw (3.15,0.52) circle (0.15cm);
\draw[dashed] (5,0) arc (270:144:3.23cm);
\node at (5,-0.4) {\large$p$};
\node at (0.7,8.2) {\large$p'$};
\node at (2.6,4.8) {\large$u$};

\draw[dashed] (2.37,5.13) -- (0.03, 5.27) node[left] {\large$v$};

\filldraw[black] (2.37,5.13) circle (2pt) ;
\filldraw[black] (1.81,2.74) circle (2pt) ;
\filldraw[black] (2.28,1.5) circle (2pt) ;
\filldraw[black] (3.3,0.47) circle (2pt) ;
\coordinate (A) at (5.2,0) {};
\coordinate (B) at (5.2,0.2) {};
\coordinate (C) at (5,0.2) {};
\draw (0.015,5.03) -- (0.22,5.015);
\draw (0.22,5.015) -- (0.25,5.25);

\end{tikzpicture}
\caption{An illustration showing the points $u$ and $v$. The dashed arc shows the horoball along which the point $u$ is pulled under the action of $f$.}
\label{twohoro}
\end{figure}
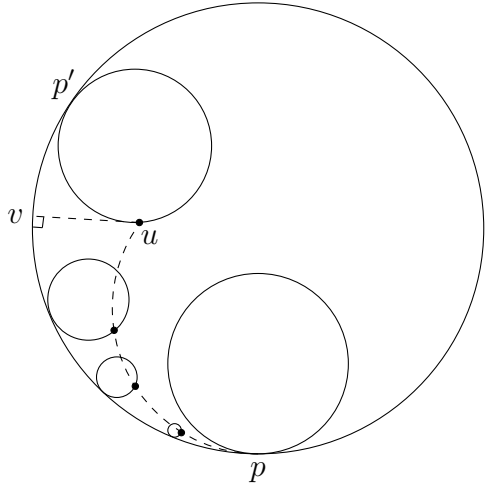
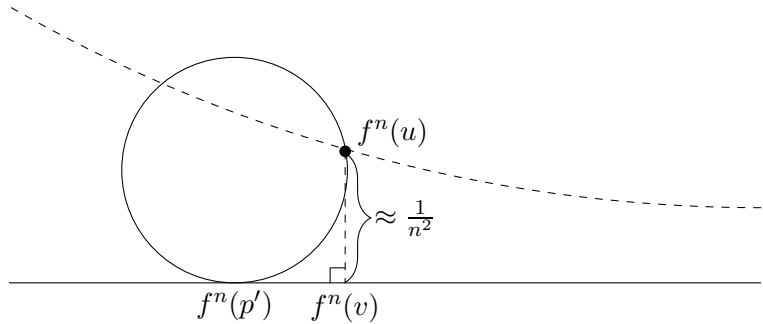
\begin{figure}[H]
\centering
\begin{tikzpicture}
\draw (0,0) -- (10,0);
\draw (3,1.5) circle (1.5cm);
\draw[dashed] (10,1) arc (270:240:20cm);
\draw[dashed] (4.47,1.75) -- (4.47,0) node[below] {$f^n(v)$};
\filldraw[black] (4.47,1.75) circle (2pt) ;
\node at (5.1,2) {$f^n(u)$};
\node at (3,-0.3) {$f^n(p')$};
\draw [decorate,decoration={brace,amplitude=10pt},xshift=-4pt,yshift=0pt]
(4.6,01.73) -- (4.6,0)node [black,midway,xshift=23pt] {
$\approx \large\frac{1}{n^2}$};
\coordinate (A) at (4.27,0) {};
\coordinate (B) at (4.27,0.2) {};
\coordinate (C) at (4.47,0.2) {};
\draw (A) -- (B);
\draw (B) -- (C);
\end{tikzpicture}
\caption{Applying Lemma \ref{Circle}, considering the dashed horoball.}
\label{dashhoro}
\end{figure}

The following lemma is also necessary for estimating hyperbolic distance, essentially saying that if $z,u \in \lset$ are sufficiently close, then $\hdist{z_T}{u_T}$ can be easily estimated.

\begin{lem}\label{Cross}
Let $z,u \in \lset$ and $T>0$ be large. If $ \size{z-u} \approx e^{-T}$, then $\hdist{z_T}{u_T} \approx 1.$ (This should be read as the implicit constants in the conclusion depend on the implicit constants in the assumption.)
\end{lem}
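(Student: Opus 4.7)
The plan is to use the closed-form expression for the hyperbolic metric in the Poincar\'e ball model and substitute explicit expressions for $z_T$ and $u_T$. Recall that for points $x,y \in \mathbb{D}^{d+1}$,
\[
\cosh \hdist{x}{y} = 1 + \frac{2|x-y|^2}{(1-|x|^2)(1-|y|^2)}.
\]
So the computation reduces to estimating the numerator and denominator of the right-hand side when $x = z_T$ and $y = u_T$.

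First I would observe that, since $z_T$ (respectively $u_T$) lies on the Euclidean radius from $\mathbf{0}$ to $z$ (resp.\ $u$) at hyperbolic distance $T$ from $\mathbf{0}$, we have $z_T = \tanh(T/2)\,z$ and $u_T = \tanh(T/2)\,u$. This gives the two key scaling estimates
\[
|z_T - u_T| \;=\; \tanh(T/2)\,|z-u| \;\approx\; |z-u| \;\approx\; e^{-T}
\]
and
\[
1 - |z_T|^2 \;=\; \operatorname{sech}^2(T/2) \;\approx\; e^{-T},
\]
with the analogous estimate for $u_T$, all valid for $T$ sufficiently large. The implicit constants depend only on the hypothesis constants in $|z-u| \approx e^{-T}$ together with universal constants coming from $\tanh$ and $\operatorname{sech}$.

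Substituting into the metric formula yields
\[
\cosh \hdist{z_T}{u_T} \;=\; 1 + \frac{2|z_T-u_T|^2}{(1-|z_T|^2)(1-|u_T|^2)} \;\approx\; 1 + \frac{e^{-2T}}{e^{-2T}} \;\approx\; 1,
\]
with the comparability constant depending only on the hypothesis constants. Since $\cosh$ is a homeomorphism from $[0,\infty)$ onto $[1,\infty)$ with $\cosh(0)=1$, the conclusion $\hdist{z_T}{u_T} \approx 1$ follows, with the lower bound coming from the lower comparability in $|z-u| \gtrsim e^{-T}$ (ensuring $\cosh \hdist{z_T}{u_T} - 1$ is bounded below by a positive constant) and the upper bound coming from $|z-u| \lesssim e^{-T}$.

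There is no real obstacle; the only point requiring care is tracking the dependence of the implicit constants, which is why the statement contains the parenthetical clarification. Once the two scaling estimates above are in hand, the proof is a one-line substitution.
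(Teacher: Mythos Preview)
Your argument is correct. The identities $z_T=\tanh(T/2)\,z$ and $1-|z_T|^2=\operatorname{sech}^2(T/2)\approx e^{-T}$ are right, and the substitution into the $\cosh$-formula gives $\cosh\hdist{z_T}{u_T}-1\approx 1$, from which both bounds in $\hdist{z_T}{u_T}\approx 1$ follow as you say.

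The paper takes a slightly different route: rather than the formula $\cosh\hdist{x}{y}=1+\dfrac{2|x-y|^2}{(1-|x|^2)(1-|y|^2)}$, it invokes the cross-ratio expression $\hdist{P}{Q}=\log\dfrac{|AQ|\,|BP|}{|AP|\,|BQ|}$, where $A,B$ are the ideal endpoints of the geodesic through $P=z_T$ and $Q=u_T$, and simply asserts that the result follows from applying this. Your approach is arguably cleaner here because it avoids having to locate $A$ and $B$ and estimate four Euclidean distances; once you parametrise $z_T$ explicitly, everything reduces to the two scaling estimates you wrote down. The paper's approach has the advantage of being marginally more geometric (and it matches the picture they draw), but it leaves more to the reader. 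Neither approach is deeper than the other; they are two standard closed forms for the same metric.
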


\begin{proof}
This is immediate from the cross ratio formula for hyperbolic distance.    Briefly, given points $P$ and $Q$ in the interior of the Poincar\'e disk, draw a geodesic between them which, upon extension, intersects the boundary of the disc at points $A$ and $B$ such that $A$ is closer to $P$ than $Q$ (see Figure \ref{crossratio}). Then we have
\begin{equation*}
\hdist{P}{Q} =  \text{log} \frac{\vert AQ \vert \vert BP \vert}{\vert AP \vert \vert BQ \vert}.  
\end{equation*}
Applying  this  formula  with $P=z_T$ and $Q=u_T$ yields the result.
\end{proof}
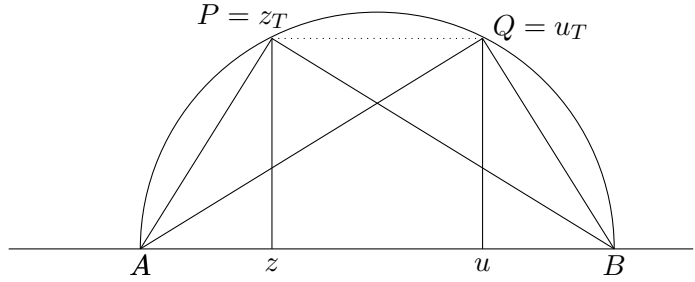
\begin{figure}[H]
\centering
\begin{tikzpicture}[scale=0.7]
\draw (11.5,0) arc (0:180:4.5cm);
\node at (2.5,-0.3) {$A$};
\node at (9,4.2) {$\qquad \qquad  Q=u_T$};
\node at (11.5,-0.3) {$B$};
\node at (5,-0.3) {$z$};
\node at (9,-0.3) {$u$};
\node at (2.5,-0.3) {$A$};
\draw (2.5,0) -- (5,4);
\draw (2.5,0) -- (9,4);
\draw (11.5,0) -- (9,4);
\draw (11.5,0) -- (5,4);
\draw (0,0) -- (13,0);
\draw (5,0) -- (5,4) node[above] {$P=z_T\qquad $};
\draw[dotted] (5,4) -- (9,4);
\draw (9,4) -- (9,0) ;
\end{tikzpicture}
\caption{The cross ratio visualised.}
\label{crossratio}
\end{figure}
 Given a standard horoball $H_p$ and $\lambda \in (0,1]$, we call $\lambda H_p$ the \textit{squeezed horoball} which still has base point $p$, but has Euclidean diameter scaled by a factor of $\lambda$, i.e. $\vert \lambda H_p \vert = \lambda \vert H_p \vert$. We also write $\Pi : \mathbb{D}^{d+1}\setminus\{\textbf{0}\} \rightarrow \mathbb{S}^d$ to denote the projection defined by choosing $\Pi(z) \in \mathbb{S}^d$ such that $\textbf{0}, z$, and $\Pi(z)$ are collinear. Given $A \subset \mathbb{D}^{d+1}\setminus\{\textbf{0}\}$, we call $\Pi(A)$ the \textit{shadow at infinity} of $A$, and note that for a horoball $H_p$, $\Pi(H_p)$ is a Euclidean ball with $\vert \Pi(H_p) \vert \approx \vert H_p \vert$.  We  require the following  lemma due to Stratmann and Velani \cite[Corollary 3.5]{SV} regarding squeezed horoballs.
\begin{lem}\label{Squeeze}
Let $H_p$ be a standard horoball for some $p \in P$, and let $\lambda \in (0,1]$. Then
\begin{equation*}
\normalfont{\mu}(\Pi(\lambda H_p)) \approx \lambda^{2\delta-k(p)} \vert H_p \vert^{\delta}.
\end{equation*}
\end{lem}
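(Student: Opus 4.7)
The plan is to apply the Global Measure Formula (Theorem~\ref{Global}) directly at the parabolic fixed point $p\in\lset$. Since $\Pi(\lambda H_p)$ is essentially a Euclidean ball on $\mathbb{S}^d$ centred at $p$ with diameter comparable to $\lambda|H_p|$, the estimate will reduce to computing $\mu(B(p,r))$ for a suitable $r\approx\lambda|H_p|$, which is exactly what Theorem~\ref{Global} gives.

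First I would verify the two geometric identifications needed. By rotational symmetry about the line through $\mathbf{0}$ and $p$, $\Pi(\lambda H_p)$ is a spherical cap centred at $p$; taking a two-dimensional slice containing this line and applying Lemma~\ref{Circle} shows that its diameter is comparable to $\lambda|H_p|$. The same two-dimensional calculation, or equivalently a direct use of the cross-ratio formula underlying Lemma~\ref{Cross}, relates the hyperbolic distance $S=S_p$ from $\mathbf{0}$ to the tip of $H_p$ to its Euclidean size via $e^{-S}\approx|H_p|$.

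Next I would choose $T$ with $e^{-T}\approx\lambda|H_p|$; since $\lambda\leq1$ we have $T\geq S-O(1)$, and after enlarging $T$ by a harmless additive constant $p_T$ sits inside $H_p$. By Lemma~\ref{ParaCentre}, $k(p,T)=k(p)$, and because $\mathbf{0}$, $p_S$ and $p_T$ lie on a common geodesic (with $p_S$ the tip of the horoball) the closest point of $\mathbb{D}^{d+1}\setminus H_p$ to $p_T$ is $p_S$, so $\rho(p,T)=T-S$. Plugging into Theorem~\ref{Global} and simplifying,
\[
\mu(B(p,e^{-T}))\approx e^{-T\delta}\,e^{-(T-S)(\delta-k(p))}
= e^{-T(2\delta-k(p))}\,e^{S(\delta-k(p))},
\]
and substituting $e^{-T}\approx\lambda|H_p|$ together with $e^{-S}\approx|H_p|$ collapses the right-hand side to $\lambda^{2\delta-k(p)}|H_p|^\delta$, as required.

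The main obstacle is the uniform geometric bookkeeping in the first two steps, particularly justifying $e^{-S_p}\approx|H_p|$ and identifying $\Pi(\lambda H_p)$ with $B(p,\lambda|H_p|)\cap\mathbb{S}^d$ with implicit constants independent of $p\in P$. This uniformity is ultimately supplied by geometric finiteness of $\Gamma$, which forces only finitely many $\Gamma$-orbits of parabolic points so that all horoball geometry is controlled by finitely many data and the constants can be absorbed into $\approx$. Once these comparisons are in hand, the algebraic collapse at the end is routine.
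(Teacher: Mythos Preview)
Your derivation is correct. The paper does not actually prove this lemma: it is quoted from Stratmann and Velani \cite[Corollary~3.5]{SV} without argument. Your approach---identifying $\Pi(\lambda H_p)$ with a ball $B(p,r)$, $r\approx\lambda|H_p|$, and then reading off the measure from the Global Measure Formula at the parabolic point $p$---is exactly how one would extract it as a corollary of Theorem~\ref{Global}, and is presumably close to what Stratmann and Velani do.

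A couple of minor remarks on the bookkeeping. The identity $e^{-S}\approx|H_p|$ follows cleanly in the ball model: the tip of $H_p$ lies at Euclidean distance $1-|H_p|$ from $\mathbf{0}$, so $S=\log\frac{2-|H_p|}{|H_p|}$ and hence $e^{-S}=\frac{|H_p|}{2-|H_p|}\approx|H_p|$ uniformly, since the standard horoballs avoid $\mathbf{0}$ and hence have uniformly bounded diameters. For the same reason the shadow comparison $|\Pi(\lambda H_p)|\approx\lambda|H_p|$ is uniform in $p$ and $\lambda$; this is already noted in the paper just before the statement of the lemma. Finally, the edge case $\lambda$ close to $1$ (where $p_T$ might sit just outside $H_p$) is harmless: if $\rho(p,T)=0$ then the Global Measure Formula gives $\mu(B(p,e^{-T}))\approx e^{-T\delta}\approx|H_p|^\delta$, which matches $\lambda^{2\delta-k(p)}|H_p|^\delta$ up to a bounded factor when $\lambda\approx 1$. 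So no appeal to the finiteness of cusp orbits is really needed here.
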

We also require the following lemma due to Fraser \cite[Lemma 5.2]{Fr1}, which allows us to count horoballs of certain sizes.
\begin{lem}\label{CountHoro}
Let $z \in \lset$ and $T>t>0$. For $t$ sufficiently large, we have
\begin{equation*}
\sum_{\substack{p \in P \cap B(z,e^{-t}) \\ e^{-t} > \vert H_p \vert \geq e^{-T}}} \vert H_p \vert^{\delta} \lesssim (T-t) \normalfont{\muball{z}{t}}
\end{equation*}
where $P$ is the set of parabolic fixed points contained in $\lset$. 
\end{lem}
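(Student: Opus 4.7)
The plan is to dyadically partition the sum by the value of $|H_p|$, bound each scale by a single measure of $B(z,e^{-t})$, and then multiply by the number of scales, which is at most $T-t + O(1)$. First I would write the sum as $\sum_{n = \lceil t \rceil}^{\lfloor T \rfloor} \sum_{p \in P_n} |H_p|^\delta$, where
\[
P_n = \{p \in P \cap B(z,e^{-t}) : e^{-n-1} < |H_p| \leq e^{-n}\},
\]
so that $|H_p|^\delta \approx e^{-n\delta}$ uniformly over $P_n$. The key geometric observation is that, because the standard horoballs are pairwise disjoint, working in the upper half space model one checks that horoballs $H_{p_i}$ of Euclidean diameter $R_i$ at base points $p_i$ being disjoint forces $|p_1-p_2| \geq \sqrt{R_1 R_2}$; at scale $\approx e^{-n}$ this means the base points are $\gtrsim e^{-n}$ apart. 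Consequently, one can fix a single $\lambda_0 \in (0,1]$, independent of $n$, such that the squeezed shadows $\{\Pi(\lambda_0 H_p)\}_{p \in P_n}$ are pairwise disjoint for every $n$.

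Next, by Lemma \ref{Squeeze} applied with this $\lambda_0$,
\[
\mu(\Pi(\lambda_0 H_p)) \approx \lambda_0^{2\delta - k(p)} |H_p|^\delta \approx |H_p|^\delta,
\]
where the constants absorb the finitely many possible values of $\lambda_0^{2\delta-k(p)}$ corresponding to $k(p) \in \{\kmin, \dots, \kmax\}$. Since $p \in B(z,e^{-t})$ and $|H_p| < e^{-t}$, every $\Pi(\lambda_0 H_p)$ is contained in $B(z, 2e^{-t})$, so by disjointness
\[
\sum_{p \in P_n} |H_p|^\delta \lesssim \sum_{p \in P_n} \mu(\Pi(\lambda_0 H_p)) \leq \mu(B(z, 2e^{-t})) \lesssim \muball{z}{t},
\]
using in the last step the doubling property of the Patterson-Sullivan measure, which is a direct consequence of the global measure formula (Theorem \ref{Global}) after a short case analysis on whether $z_t$ lies inside some horoball. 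Summing over the $O(T-t)$ values of $n$ yields the desired estimate.

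The main obstacle I expect is establishing the uniform-in-$n$ disjointness (or at least bounded overlap) of the squeezed shadows at each scale, which boils down to a scale-free comparison between the Euclidean separation of horoball base points and horoball size. This can be read off from Lemma \ref{Circle} and the disjointness of the standard horoballs, and the need for a single $\lambda_0$ that works at every scale $n$ is the only place in the argument where the uniform geometry of the horoball family is genuinely used. If exact disjointness proves delicate, a bounded-overlap covering statement at each scale suffices for the same conclusion with modified constants.
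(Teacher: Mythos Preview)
The paper does not actually prove this lemma: it is quoted verbatim from \cite[Lemma~5.2]{Fr1} with no argument given, so there is no ``paper's own proof'' to compare against. That said, your proposal is essentially the standard proof and is correct. The separation estimate $|p_1-p_2|\ge\sqrt{R_1R_2}$ for disjoint horoballs is exact in the upper half-space model and holds up to uniform constants in the ball model for small horoballs (or one simply transports the problem to $\mathbb{H}^{d+1}$ via a Cayley map near $z$); this yields a uniform $\lambda_0$ for which the shadows $\Pi(\lambda_0 H_p)$ at each dyadic scale are pairwise disjoint. Lemma~\ref{Squeeze} then converts $|H_p|^\delta$ into $\mu(\Pi(\lambda_0 H_p))$ up to constants, and disjointness bounds each scale by $\mu(B(z,2e^{-t}))$. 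The doubling step $\mu(B(z,2e^{-t}))\lesssim\muball{z}{t}$ is legitimate: either observe that $\aso\mu<\infty$ by Theorem~\ref{fraserthm}, or argue directly from Theorem~\ref{Global} using $|\rho(z,t)-\rho(z,t-\log 2)|\le\log 2$.

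One minor caveat: your count of dyadic scales is $T-t+O(1)$, so what you actually prove is
\[
\sum_{\substack{p \in P \cap B(z,e^{-t}) \\ e^{-t} > \vert H_p \vert \geq e^{-T}}} \vert H_p \vert^{\delta} \ \lesssim \ (T-t+1)\,\muball{z}{t},
\]
which only collapses to the stated bound when $T-t$ is bounded below. In the paper the lemma is always invoked with $T-t\ge\log 10$, so this makes no difference in the applications, but strictly as stated the inequality can fail as $T\to t^+$ and you should note that restriction.
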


\subsection{The box dimension of $\mu$}\label{BoxMu}
\subsubsection{Upper bound}
 We show $\ubox \mu \leq \max\{\delta, 2\delta-\kmin\}.$ Let $z \in \lset$ and $T>0$. We have
\begin{align*}
\muball{z}{T} \nonumber &\gtrsim e^{-T\delta}e^{-\rho(z,T)(\delta-k(z,T))}  \nonumber  
\geq e^{-T\delta}\left(e^{-T}\right)^{\max\{0, \delta-\kmin\}}  
= \left(e^{-T}\right)^{\max\{\delta, 2\delta-\kmin\}} 
\end{align*}
which proves $\ubox\mu \leq \max\{\delta, 2\delta-\kmin\}$.
\subsubsection{Lower bound}
 We show $\lbox \mu \geq \max\{\delta, 2\delta-\kmin\}.$ Note that we have $\lbox \mu \geq \lbox \lset = \delta$, so 
it suffices to prove that $\lbox \mu \geq 2\delta-\kmin$, and therefore we may assume that $\delta \geq \kmin$. Let $p$ be a parabolic fixed point such that $k(p) = \kmin$, and let $\epsilon \in (0,1)$. By Lemma \ref{ParaCentre}, it follows that for sufficiently large $T>0$, we have
\begin{align*}
\muball{p}{T} \nonumber & \lesssim e^{-T\delta}e^{-\rho(p,T)(\delta-k(p))} \nonumber 
 \leq e^{-T\delta}e^{-(1-\epsilon)T(\delta-\kmin)} 
 = \left(e^{-T}\right)^{\delta+(1-\epsilon)(\delta-\kmin)} 
\end{align*}
which proves $\lbox\mu \geq 2\delta-\kmin-\epsilon(\delta-\kmin)$, and letting $\epsilon \rightarrow 0$ proves the lower bound.

\subsection{The Assouad spectrum of $\mu$}\label{AsospecMu}

\subsubsection{When $\delta < \kmin$}\label{AsospecMu1} 
 \textit{Lower bound}: We show \[\asospec\mu \geq \delta + \min\left\{1,\frac{\theta}{1-\theta}\right\}(\kmax-\delta).\]
Let $\theta \in (0,1)$, let $p \in \lset$ be a parabolic fixed point such that $k(p) = \kmax$, $f$ be a parabolic element fixing $p$, and $n \in \mathn$ be very large. Choose $p \neq z_0 \in \lset$, and let $z=f^n(z_0)$, noting that $z \rightarrow p$ as $n \rightarrow \infty$. We assume $n$ is large enough to ensure that the geodesic joining \textbf{0} and $z$ intersects $H_p$, and choose $T>0$ to be the larger of two values such that $z_T$ lies on the boundary of $H_p$. 

We now restrict our attention to the hyperplane $H(p,z,z_T)$ restricted to $\mathbb{D}^{d+1}$. Define $v$ to be the point on $H_p \cap   H(p,z,z_T)$ such that $v$ lies on the quarter circle with centre $z$ and Euclidean radius $e^{-T\theta}$ (see Figure \ref{diag} below). We also consider 2 additional points $u$ and $w$, where $u$ is the `shadow at infinity' of $v$ and $w=u_{T\theta}$.

\begin{figure}[H]
\centering
\begin{tikzpicture}[scale=0.9]
\draw (0,2.35) arc (230:310:10cm);
\draw (0,0) -- (13,0) node[right,below] {$\mathbb{S}^d$};
\draw (0,0) -- (6.5,0) node[below]{$p$};
\draw (8,4.5) -- (8,0) node[below]{$z$};
\node at (7.7,0.4) {$z_T$};
\node at (7.6,4) {$z_{T \theta}$};
\node at (12,1.4) {$v$};
\draw (12,0) arc (0:90:4cm);
\draw[dotted] (8,4) -- (11.7,4) node[right] {$w$};
\draw[dotted] (11.7,4) -- (11.7,1.4);
\draw (11.7,1.5) -- (11.7,0) node[below] {$u$};
\coordinate (A) at (11.5,0) {};
\coordinate (B) at (11.5,0.2) {};
\coordinate (C) at (11.7,0.2) {};
\draw (A) -- (B);
\draw (B) -- (C);
\draw [decorate,decoration={brace,mirror,amplitude=10pt}]
(8,0) -- (8,4)node [black,midway,xshift=30pt] {\footnotesize
$\approx e^{-T\theta}$};
\end{tikzpicture}
\caption{An overview of the horoball $H_p$ along with our chosen points. We wish to find a lower bound for $\hdist{z_{T\theta}}{v}$.}

\label{diag}

\end{figure}
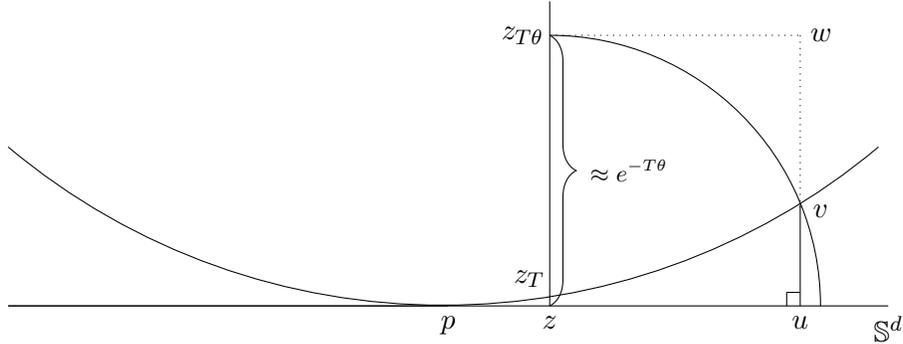
Our goal is to bound $\hdist{v}{w}$ from below and $\hdist{w}{z_{T \theta}}$ from above. Consider the Euclidean distance between $z$ and $p$. Note that $z_T$ lies on the horoball $H_p$, so using (\ref{taylor1}) we have, for sufficiently large $n$,
\begin{equation*}
\vert z - p \vert \lesssim \sqrt{\vert z - z_T \vert} \lesssim e^{-\frac{T}{2}}.
\end{equation*}
\begin{figure}[H]
\centering
\begin{tikzpicture}[scale=0.9]
\draw (0,0) -- (13,0);
\draw (0,2.35) arc (230:310:10cm);
\node at (6.5,-0.5) {$p$};
\node at (12,-0.5) {$z$};
\node at (11.6,1.8) {$z_T$};
\draw (12,0) -- (12,3);
\draw [decorate,decoration={brace,mirror,amplitude=10pt},xshift=-4pt,yshift=0pt]
(6.65,-0.7) -- (12.15,-0.7)node [black,midway,yshift=-15pt] {\footnotesize
$\lesssim e^{-\frac{T}{2}}$};
\draw [decorate,decoration={brace,amplitude=10pt}]
(12,1.7) -- (12,0)node [black,midway,xshift=26pt] {\footnotesize
$\lesssim e^{-T}$};
\end{tikzpicture}
\caption{Bounding $\vert z - p \vert$ from above using Lemma \ref{Circle}.}
\end{figure}
Also note that clearly $\vert z-u \vert \lesssim e^{-T\theta}$. We can apply (\ref{taylor2}), this time considering the point $v$, and so for sufficiently large $n$, we obtain
\begin{align*}
\vert u - v \vert & \lesssim \left(e^{-\frac{T}{2}}+e^{-T\theta}\right)^2
 \lesssim e^{-\min\{2T\theta,T\}}.
\end{align*} 
\begin{figure}[H]
\centering
\begin{tikzpicture}[scale=0.9]
\draw (0,0) -- (13,0);
\draw (0,2.35) arc (230:310:10cm);
\node at (6.5,-0.5) {$p$};
\node at (12,-0.5) {$u$};
\node at (11.6,1.8) {$v$};
\node at (8,-0.5) {$z$};
\node at (7.7,0.4) {$z_T$};
\draw (8,0) -- (8,3);
\draw (12,0) -- (12,3);
\draw [decorate,decoration={brace,mirror,amplitude=5pt},xshift=-4pt,yshift=0pt]
(6.65,-0.7) -- (12.15,-0.7)node [black,midway,yshift=-17pt] {\footnotesize
$\lesssim e^{-\frac{T}{2}}+e^{-T\theta}$};
\end{tikzpicture}
\caption{Bounding $\vert u - v \vert$ from above, again using Lemma \ref{Circle}.}
\end{figure}
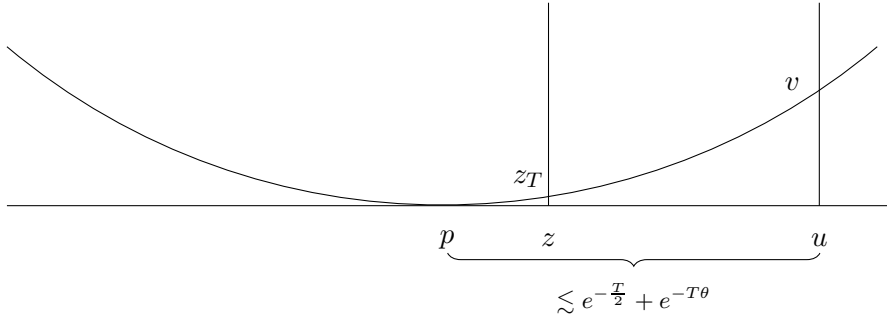
This gives us
\begin{align*}
\hdist{v}{w} &= \text{log}\frac{\vert z-z_{T\theta} \vert}{\vert u-v \vert}  \geq \text{log}\frac{e^{-T\theta}/C_1}{C_2 e^{-\min\{2T\theta,T\}}}
= \min\{T(1-\theta),T\theta\}-\text{log}(C_1C_2) \nonumber
\end{align*}
for some constants $C_1, C_2$. We can apply Lemma \ref{Cross} to deduce that $\hdist{z_{T\theta}}{w} \leq C_3 $ for some constant $C_3$. Therefore, by the triangle inequality
\begin{align}\label{triangle}
\rho(z,T\theta) & = \hdist{z_{T\theta}}{v}   \geq  \hdist{w}{v} - \hdist{z_{T\theta}}{w} 
 \geq \min\{T(1-\theta),T\theta\}-\text{log}(C_1C_2)-C_3. 
\end{align}
By Theorem \ref{Global}, we have
\begin{align*}
\frac{\muball{z}{T\theta}}{\muball{z}{T}}  \approx \frac{e^{-\delta T\theta}}{e^{-\delta T}} \frac{e^{\rho(z,T\theta)(\kmax - \delta)}}{1} 
&\gtrsim \left(e^{T(1-\theta)}\right)^{\delta} e^{\min\{T(1-\theta),T\theta\}(\kmax-\delta)} \qquad  \text{by} \ (\ref{triangle})  \\ 
&=\left(e^{T(1-\theta)}\right)^{\delta} \left(e^{T(1-\theta)}\right)^{\min\left\{1,\frac{\theta}{1-\theta}\right\}(\kmax-\delta)} \\ 
&= \left(e^{T(1-\theta)}\right)^{ \delta + \min\left\{1,\frac{\theta}{1-\theta}\right\}(\kmax-\delta)} \nonumber
\end{align*}
which gives \[\asospec\mu \geq \delta + \min\left\{1,\frac{\theta}{1-\theta}\right\}(\kmax-\delta)\] as required.

 \textit{Upper bound}: We show \[\asospec\mu \leq \delta + \min\left\{1,\frac{\theta}{1-\theta}\right\}(\kmax-\delta).\]
Let $z \in \lset$, $T>0$. Note that we have
\begin{equation*}
\asospec \mu \leq \aso \mu = \kmax
\end{equation*} so we assume that $\theta \in (0,{1}/{2})$. Then by Theorem \ref{Global}, we have
\begin{align*}
\frac{\muball{z}{T\theta}}{\muball{z}{T}}  \approx \frac{e^{-\delta T\theta}}{e^{-\delta T}}\frac{e^{-\rho(z,T\theta)(\delta - k(z,T\theta))}}{e^{-\rho(z,T)(\delta -k(z,T))}} 
&\lesssim \left(e^{T(1-\theta)}\right)^{\delta} e^{\rho(z,T\theta)(\kmax-\delta)}\\ 
&\leq \left(e^{T(1-\theta)}\right)^{\delta} e^{T\theta(\kmax-\delta)}\\ 
&= \left(e^{T(1-\theta)}\right)^{\delta + \frac{\theta}{1-\theta}(\kmax-\delta)}
\end{align*}
which gives \[\asospec\mu \leq \delta + \frac{\theta}{1-\theta}(\kmax-\delta)\] as required.

\subsubsection{When $\kmin \leq \delta < {(\kmin+\kmax)}/{2}$}\label{AsospecMu2} 
 \textit{Lower bound}: We show \[\asospec\mu \geq 2\delta-\kmin + \min\left\{1,\frac{\theta}{1-\theta}\right\}(\kmin+\kmax-2\delta).\]
Let $\theta \in (0,{1}/{2})$, let $p,p' \in \lset$ be parabolic fixed points such that $k(p) = \kmax$ and $k(p') = \kmin$, and let $f$ be a parabolic element fixing $p$. Let $n$ be a large positive integer and let $z=f^n(p')$. By Lemma \ref{HoroRadius}, we note that for sufficiently large $n$ we may choose $T$ such that $k(z,T) = \kmin$, $k(z,T\theta) = \kmax$, and $\vert z - p \vert = e^{-T\theta}$ (see Figure \ref{twoballs}).

We can make use of Lemma \ref{Cross} to deduce that
\begin{equation*}
\hdist{z_{T\theta}}{p_{T\theta}} \leq C_1
\end{equation*}
for some constant $C_1$. Also, by Lemma \ref{ParaCentre}, given $\epsilon \in (0,1)$ we have that $\rho(p,T\theta) \geq (1-\epsilon)T\theta$ for sufficiently large $n$. This gives
\begin{align}\label{paracen}
\rho(z,T\theta) \geq \rho(p,T\theta) - C_1 
\geq (1-\epsilon)T\theta - C_1.
\end{align}
Finally, we note that as $\vert z - p \vert =  e^{-T\theta}$, by Lemma \ref{Circle} we have $\vert H_z \vert \approx e^{-2T\theta}$ (see Figure \ref{twoballs2}), which implies that
\begin{equation}\label{taylor3}
\rho(z,T)  \geq \text{log}\frac{e^{-2T\theta}/C_2}{C_3e^{-T}} = T(1-2\theta) - \text{log}(C_2C_3)
\end{equation}
for some constants $C_2, C_3$.
\begin{figure}[H]
\centering
\begin{tikzpicture}[scale=0.9]
\draw (0,0) -- (13,0);
\draw (1,3.96) arc (200:340:6cm);
\node at (6.61,-0.5) {$p$};
\node at (4,-0.5) {$z$};
\draw(4,0) -- (4,5);
\filldraw[black] (4,3) circle (2pt) ;
\filldraw[black] (4,0.3) circle (2pt) ;
\draw[dashed] (4,0.3) -- (3.3,0.3) node[left] {$z_T$};
\node at (3.5,3) {$z_{T\theta}$};
\draw (4,0.3) circle (0.3cm);
\draw (6.61,0) -- (6.61,5);
\filldraw[black] (6.61,3) circle (2pt);
\node at (7.15,3) {$p_{T\theta}$};
\draw[dashed] (4,3) -- (6.61,3);
\coordinate (A) at (4.2,0) {};
\coordinate (B) at (4.2,0.2) {};
\coordinate (C) at (4,0.2) {};
\coordinate (D) at (6.81,0) {};
\coordinate (E) at (6.81,0.2) {};
\coordinate (F) at (6.61,0.2) {};
\draw (A) -- (B);
\draw (B) -- (C);
\draw (D) -- (E);
\draw (E) -- (F);
\draw [decorate,decoration={brace,mirror,amplitude=10pt},xshift=0pt,yshift=0pt]
(4,-0.7) -- (6.61,-0.7)node [black,midway,xshift=5pt, yshift=-17pt] {\footnotesize
$e^{-T\theta}$};
\end{tikzpicture}
\caption{Making use of Lemma \ref{HoroRadius} so we can choose our desired $T$.}
\label{twoballs}
\end{figure}
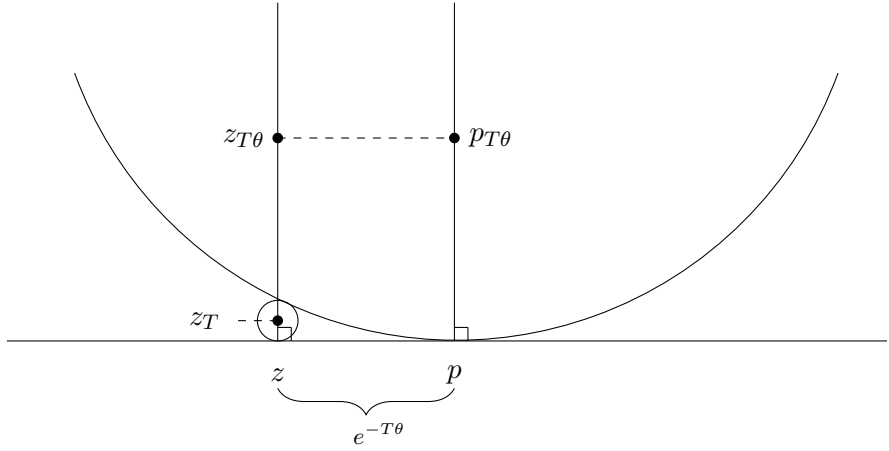

\begin{figure}[H]
\centering
\begin{tikzpicture}[scale=0.9]
\draw (0,0) -- (13,0);
\draw (6.5,2) circle (2cm);
\draw (6.5,5) -- (6.5,0) node[below] {$z$};
\filldraw[black] (6.5,2) circle (2pt);
\draw (13,2.5) arc (260:247:50cm);
\node at (7,2) {$z_T$};
\draw [decorate,decoration={brace,mirror,amplitude=10pt},xshift=0pt,yshift=0pt]
(6.5,0) -- (6.5,2)node [black,midway,xshift=25pt] {\footnotesize
$\approx e^{-T}$};
\draw [decorate,decoration={brace,amplitude=10pt},xshift=0pt,yshift=0pt]
(4.5,0) -- (4.5,4)node [black,midway,xshift=-28pt] {\footnotesize
$\approx e^{-2T\theta}$};
\coordinate (A) at (6.3,0) {};
\coordinate (B) at (6.3,0.2) {};
\coordinate (C) at (6.5,0.2) {};
\draw (A) -- (B);
\draw (B) -- (C);
\end{tikzpicture}
\caption{Calculating $\rho(z,T)$ using Lemma \ref{Circle}.}
\label{twoballs2}
\end{figure}
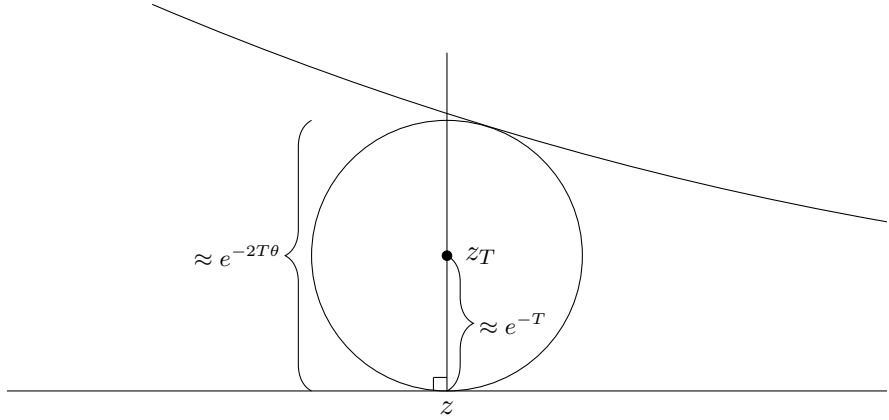
Applying (\ref{Global}), we get
\begin{align*}
\frac{\muball{z}{T\theta}}{\muball{z}{T}}  &\approx \frac{e^{-\delta T\theta}}{e^{-\delta T}} \frac{e^{\rho(z,T\theta)(\kmax - \delta)}}{e^{-\rho(z,T)(\delta - \kmin)}} \\
&\gtrsim \left(e^{T(1-\theta)}\right)^{\delta} e^{(1-\epsilon)T\theta(\kmax-\delta)+(1-2\theta)T(\delta-\kmin)} \  \text{by} \ (\ref{paracen}) \ \text{and} \ (\ref{taylor3})\\ 
&=\left(e^{T(1-\theta)}\right)^{\delta} e^{T(1-\theta)\delta + T(1-\theta) \kmin + T\theta(\kmin+\kmax-2\delta)-\epsilon T\theta(\kmax-\delta)} \\ 
&\geq \left(e^{T(1-\theta)}\right)^{2\delta - \kmin + \frac{\theta}{1-\theta}(\kmin+\kmax-2\delta)-\epsilon\frac{\theta}{1-\theta}(\kmax-\delta)} 
\end{align*}
which proves \[\asospec \mu \geq 2\delta - \kmin + \frac{\theta}{1-\theta}(\kmin+\kmax-2\delta)-\epsilon\frac{\theta}{1-\theta}(\kmax-\delta)\] and letting $\epsilon \rightarrow 0$ proves the desired lower bound.

The case when $\theta \geq {1}/{2}$ follows identically to the lower bound in Section \ref{AsospecMu1}, so we omit the details.

 \textit{Upper bound}: We show \[\asospec\mu \leq 2\delta-\kmin + \min\left\{1,\frac{\theta}{1-\theta}\right\}(\kmin+\kmax-2\delta).\]
Let $z \in \lset$, $T>0$, $\theta \in (0,{1}/{2})$ (the case when $\theta \geq {1}/{2}$ follows similarly to the upper bound in Section \ref{AsospecMu1}). If $z_{T}$ and $z_{T\theta}$ do not lie in a common standard horoball, we may use the fact that 
\begin{equation}\label{escape1}
\rho(z,T) \leq T(1-\theta) - \rho(z,T\theta).
\end{equation}
We have by Theorem \ref{Global}
\begin{align*}
\frac{\muball{z}{T\theta}}{\muball{z}{T}}  \approx \frac{e^{-\delta T\theta}}{e^{-\delta T}}\frac{e^{-\rho(z,T\theta)(\delta - k(z,T\theta))}}{e^{-\rho(z,T)(\delta -k(z,T))}}
&\leq \frac{e^{-\delta T\theta}}{e^{-\delta T}}\frac{e^{-\rho(z,T\theta)(\delta - \kmax)}}{e^{-\rho(z,T)(\delta -\kmin)}} \\
&= \left(e^{T(1-\theta)}\right)^{\delta} e^{\rho(z,T\theta)(\kmax-\delta) + \rho(z,T)(\delta -\kmin)}\\
&\leq  \left(e^{T(1-\theta)}\right)^{\delta} e^{T\theta(\kmin+\kmax-2\delta) + T(1-\theta)(\delta -\kmin)} \ \text{by} \ (\ref{escape1})\\
&\leq  \left(e^{T(1-\theta)}\right)^{2\delta-\kmin + \frac{\theta}{1-\theta}(\kmin+\kmax-2\delta)}.
\end{align*}
If $z_{T}$ and $z_{T\theta}$ do lie in a common standard horoball $H_p$ and $\delta \geq k(p)$, then we use the inequality
\begin{equation}\label{escape2}
\vert \rho(z,T) - \rho(z,T\theta) \vert \leq T(1-\theta).
\end{equation}
Then we have
\begin{align*}
\frac{\muball{z}{T\theta}}{\muball{z}{T}}  \approx \frac{e^{-\delta T\theta}}{e^{-\delta T}}\frac{e^{-\rho(z,T\theta)(\delta - k(z,T\theta))}}{e^{-\rho(z,T)(\delta -k(z,T))}} 
&= \left(e^{T(1-\theta)}\right)^{\delta} e^{(\rho(z,T)-\rho(z,T\theta))(\delta -k(p))}\\
&\leq \left(e^{T(1-\theta)}\right)^{\delta} e^{T(1-\theta)(\delta -\kmin)} \quad  \text{by} \ (\ref{escape2})\\
&=  \left(e^{T(1-\theta)}\right)^{2\delta-\kmin}\\
&\leq  \left(e^{T(1-\theta)}\right)^{2\delta-\kmin + \frac{\theta}{1-\theta}(\kmin+\kmax-2\delta)}
\end{align*}
if $\rho(z,T)-\rho(z,T\theta) \geq 0$, and otherwise
\begin{align*}
\frac{\muball{z}{T\theta}}{\muball{z}{T}}  
\approx \left(e^{T(1-\theta)}\right)^{\delta} e^{(\rho(z,T)-\rho(z,T\theta))(\delta -k(p))} 
&\leq \left(e^{T(1-\theta)}\right)^{\delta} \\
&\leq  \left(e^{T(1-\theta)}\right)^{2\delta-\kmin + \frac{\theta}{1-\theta}(\kmin+\kmax-2\delta)}.
\end{align*}
If $\delta < k(p)$, then we use
\begin{equation}\label{escape3}
\rho(z,T\theta) - \rho(z,T) \leq T\theta. 
\end{equation}
By Theorem \ref{Global},
\begin{align*}
\frac{\muball{z}{T\theta}}{\muball{z}{T}}  
 \approx \left(e^{T(1-\theta)}\right)^{\delta} e^{(\rho(z,T)-\rho(z,T\theta))(\delta -k(p))}
&\leq \left(e^{T(1-\theta)}\right)^{\delta} e^{T\theta(k(p)-\delta)} \quad  \text{by} \ (\ref{escape3})\\
&\leq  \left(e^{T(1-\theta)}\right)^{\delta + \frac{\theta}{1-\theta}(\kmax-\delta)}\\
&\leq  \left(e^{T(1-\theta)}\right)^{2\delta-\kmin + \frac{\theta}{1-\theta}(\kmin+\kmax-2\delta)}.
\end{align*}
In all cases, we have \[\asospec\mu \leq 2\delta-\kmin + \frac{\theta}{1-\theta}(\kmin+\kmax-2\delta)\] as required.

\subsubsection{When $\delta \geq  {(\kmin+\kmax)}/{2}$}\label{AsospecMu3}
 We show $\asospec \mu = 2\delta-\kmin.$ This follows easily, since  
\[
2 \delta -\kmin = \ubox \mu \leq \asospec \mu \leq \aso \mu = 2 \delta-\kmin
\]
 and so $\asospec \mu = 2\delta-\kmin$, as required.

\subsection{The lower spectrum of $\mu$}\label{LowspecMu}
The proofs of the bounds for the lower spectrum follow similarly to the Assouad spectrum, and so we only sketch the arguments. 

\subsubsection{When $\delta > \kmax$}\label{LowspecMu1} 
 \textit{Upper bound}: We show \[\lowspec\mu \leq  \delta - \min\left\{1,\frac{\theta}{1-\theta}\right\}(\delta-\kmin).\]
Let $\theta \in (0,1)$, $p \in \lset$ be a parabolic fixed point such that $k(p) = \kmin$, $f$ be a parabolic element fixing $p$, and $n \in \mathn$ be very large. Choose $p \neq z_0 \in \lset$, and let $z=f^n(z_0)$. We choose $T>0$ such that $z_T$ is the 'exit point' from $H_p$. Identically to the lower bound in Section \ref{AsospecMu1}, we have that
\begin{equation*}
\rho(z,T\theta)   \geq \min\{T(1-\theta),T\theta\}-C 
\end{equation*}
for some constant $C$. Applying Theorem \ref{Global}, we have
\begin{align*}
\frac{\muball{z}{T\theta}}{\muball{z}{T}}  \approx \frac{e^{-\delta T\theta}}{e^{-\delta T}} \frac{e^{-\rho(z,T\theta)(\delta-\kmin)}}{1} 
&\lesssim \left(e^{T(1-\theta)}\right)^{\delta} e^{-\min\{T(1-\theta),T\theta\}(\delta-\kmin)}\\ 
&=\left(e^{T(1-\theta)}\right)^{\delta}\left(e^{T(1-\theta)}\right)^{-\min\left\{1,\frac{\theta}{1-\theta}\right\}(\delta-\kmin)} \\ 
&= \left(e^{T(1-\theta)}\right)^{ \delta - \min\left\{1,\frac{\theta}{1-\theta}\right\}(\delta-\kmin)} 
\end{align*}
which gives \[\lowspec\mu \leq  \delta - \min\left\{1,\frac{\theta}{1-\theta}\right\}(\delta-\kmin)\] as required.

 \textit{Lower bound}: We show \[\lowspec\mu \geq  \delta - \min\left\{1,\frac{\theta}{1-\theta}\right\}(\delta-\kmin).\]
Let $z \in \lset$, $T>0$. Note that $\lowspec \mu \geq \low \mu = \kmin$ so we assume $\theta \in (0,{1}/{2})$. Then by Theorem \ref{Global}, we have
\begin{align*}
\frac{\muball{z}{T\theta}}{\muball{z}{T}}  \approx \frac{e^{-\delta T\theta}}{e^{-\delta T}}\frac{e^{-\rho(z,T\theta)(\delta - k(z,T\theta))}}{e^{-\rho(z,T)(\delta -k(z,T))}}
&\gtrsim \left(e^{T(1-\theta)}\right)^{\delta} e^{-\rho(z,T\theta)(\delta-\kmin)} \\
&\geq \left(e^{T(1-\theta)}\right)^{\delta} e^{-T\theta(\delta-\kmin)}\\ 
&= \left(e^{T(1-\theta)}\right)^{\delta - \frac{\theta}{1-\theta}(\delta-\kmin)}
\end{align*}

which gives \[\lowspec\mu \geq  \delta - \frac{\theta}{1-\theta}(\delta-\kmin)\] as required.

\subsubsection{When ${(\kmin+\kmax)}/{2} < \delta \leq \kmax$}\label{LowspecMu2} 
 \textit{Upper bound}: We show \[\lowspec\mu \leq  2\delta-\kmax - \min\left\{1,\frac{\theta}{1-\theta}\right\}(2\delta-\kmin-\kmax).\] 
Similarly to the lower bound in Section \ref{AsospecMu2}, we only need to deal with the case when $\theta \in (0,{1}/{2})$. Let $p,p' \in \lset$ be parabolic fixed points such that $k(p) = \kmin$ and $k(p') = \kmax$, and let $f$ be a parabolic element fixing $p$. Let $n$ be a large integer and let $z = f^n(p')$. Again, by Lemma \ref{HoroRadius}, for sufficiently large $n$ we may choose $T$ such that $k(z,T)=\kmax$ and $k(z,T\theta)=\kmin$. We may argue in the same manner as the lower bound in Section \ref{AsospecMu2} to show that, for sufficiently large $n$,
\begin{align*}
\rho(z,T\theta) \geq (1-\epsilon)T\theta - C_1 \quad 
\text{and} \qquad  \rho(z,T) \geq T(1-2\theta) - C_2
\end{align*}
for some $\epsilon \in (0,1)$ and some constants $C_1,C_2$. Applying Theorem \ref{Global} gives
\begin{align*}
\frac{\muball{z}{T\theta}}{\muball{z}{T}}  &\approx \frac{e^{-\delta T\theta}}{e^{-\delta T}} \frac{e^{\rho(z,T\theta)(\kmin-\delta)}}{e^{-\rho(z,T)(\delta - \kmax)}}\\ 
&\lesssim \left(e^{T(1-\theta)}\right)^{\delta} e^{(1-\epsilon)T\theta(\kmin-\delta)+(1-2\theta)T(\delta-\kmax)}\\ 
&=\left(e^{T(1-\theta)}\right)^{\delta} e^{T(1-\theta)\delta - T(1-\theta)\kmax  - T\theta(2\delta-\kmin-\kmax)+\epsilon T\theta(\delta-\kmin)} \\ 
&= \left(e^{T(1-\theta)}\right)^{2\delta-\kmax - \frac{\theta}{1-\theta}(2\delta-\kmin-\kmax)+\epsilon\frac{\theta}{1-\theta}(\delta-\kmin)} 
\end{align*}
which gives \[\lowspec \mu \leq 2\delta-\kmax - \frac{\theta}{1-\theta}(2\delta-\kmin-\kmax)+\epsilon\frac{\theta}{1-\theta}(\delta-\kmin)\] and letting $\epsilon \rightarrow 0$ proves the desired upper bound.

 \textit{Lower bound}: We show \[\lowspec\mu \geq  2\delta-\kmax - \min\left\{1,\frac{\theta}{1-\theta}\right\}(2\delta-\kmin-\kmax).\]
Let $z \in \lset$, $T>0$, $\theta \in (0,{1}/{2})$ (the case when $\theta \geq {1}/{2}$ follows similarly to the lower bound in Section \ref{LowspecMu1}). Suppose $z_T$ and $z_{T\theta}$ do not lie in a common standard horoball.
Applying Theorem \ref{Global} gives
\begin{align*}
\frac{\muball{z}{T\theta}}{\muball{z}{T}}  \approx \frac{e^{-\delta T\theta}}{e^{-\delta T}}\frac{e^{-\rho(z,T\theta)(\delta - k(z,T\theta))}}{e^{-\rho(z,T)(\delta -k(z,T))}}
&\geq \frac{e^{-\delta T\theta}}{e^{-\delta T}}\frac{e^{-\rho(z,T\theta)(\delta - \kmin)}}{e^{-\rho(z,T)(\delta -\kmax)}} \\
&= \left(e^{T(1-\theta)}\right)^{\delta} e^{\rho(z,T\theta)(\kmin - \delta) + \rho(z,T)(\delta -\kmax)}\\
&\geq  \left(e^{T(1-\theta)}\right)^{\delta} e^{T\theta(\kmax+\kmin-2\delta) + T(1-\theta)(\delta -\kmin)} \quad  \text{by} \ (\ref{escape1})\\
&\geq  \left(e^{T(1-\theta)}\right)^{2\delta-\kmin - \frac{\theta}{1-\theta}(2\delta-\kmin-\kmax)}.
\end{align*}
If $z_T$ and $z_{T\theta}$ do lie in a common standard horoball $H_p$ and $\delta \leq k(p)$, then we have
\begin{align*}
\frac{\muball{z}{T\theta}}{\muball{z}{T}}  \approx \frac{e^{-\delta T\theta}}{e^{-\delta T}}\frac{e^{-\rho(z,T\theta)(\delta - k(z,T\theta))}}{e^{-\rho(z,T)(\delta -k(z,T))}} 
&= \left(e^{T(1-\theta)}\right)^{\delta} e^{(\rho(z,T)-\rho(z,T\theta))(\delta -k(p))}\\
&\geq \left(e^{T(1-\theta)}\right)^{\delta} e^{T(1-\theta)(\delta -\kmax)} \quad  \text{by} \ (\ref{escape2}) \\
&=  \left(e^{T(1-\theta)}\right)^{2\delta-\kmax}\\
&\geq  \left(e^{T(1-\theta)}\right)^{2\delta-\kmax  - \frac{\theta}{1-\theta}(2\delta-\kmin-\kmax)}
\end{align*}
if $\rho(z,T)-\rho(z,T\theta) \geq 0$, and otherwise
\begin{align*}
\frac{\muball{z}{T\theta}}{\muball{z}{T}}
\approx \left(e^{T(1-\theta)}\right)^{\delta} e^{(\rho(z,T)-\rho(z,T\theta))(\delta -k(p))}
&\geq \left(e^{T(1-\theta)}\right)^{\delta} \\
&\geq  \left(e^{T(1-\theta)}\right)^{2\delta-\kmax  - \frac{\theta}{1-\theta}(2\delta-\kmin-\kmax)}.
\end{align*}
If $\delta > k(p)$, then
\begin{align*}
\frac{\muball{z}{T\theta}}{\muball{z}{T}} 
\approx \left(e^{T(1-\theta)}\right)^{\delta} e^{(\rho(z,T)-\rho(z,T\theta))(\delta -k(p))}
&\geq \left(e^{T(1-\theta)}\right)^{\delta} e^{-T\theta(\delta-k(p))} \quad  \text{by} \ (\ref{escape3})\\
&\geq  \left(e^{T(1-\theta)}\right)^{\delta - \frac{\theta}{1-\theta}(\delta-\kmin)}\\
&\geq  \left(e^{T(1-\theta)}\right)^{2\delta-\kmax - \frac{\theta}{1-\theta}(2\delta-\kmin-\kmax)}.
\end{align*}
In all cases, we have \[\lowspec\mu \geq  2\delta-\kmax - \frac{\theta}{1-\theta}(2\delta-\kmin-\kmax)\] as required.

\subsubsection{When $\delta \leq {(\kmin+\kmax)}/{2}$}\label{LowspecMu3} 
 We show $\lowspec \mu = 2\delta-\kmax.$ Let $p,p' \in \lset$ be two parabolic fixed points such that $k(p') = \kmax$, let $f$ be a parabolic element fixing $p$ and let $n$ be a large positive integer. Let $z = f^n(p')$ and let $\epsilon \in (0,1)$. Note that for sufficiently large $n$ and Lemma \ref{ParaCentre}, we may choose $T$ such that
\begin{equation*}
\rho(z,T) \geq (1-\epsilon)T \ \text{and} \ \rho(z,T\theta)=0.
\end{equation*}
\begin{figure}[H]
\centering
\begin{tikzpicture}[scale=0.75]
\draw (0,0) -- (13,0);
\draw (6.5,2) circle (2cm);
\draw (6.5,5) -- (6.5,0) node[below] {$z$};
\filldraw[black] (6.5,0.5) circle (2pt);
\filldraw[black] (6.5,4) circle (2pt);
\node at (7,0.5) {$z_T$};
\node at (6.0,4.25) {$z_{T\theta}$};
\coordinate (A) at (6.3,0) {};
\coordinate (B) at (6.3,0.2) {};
\coordinate (C) at (6.5,0.2) {};
\draw (A) -- (B);
\draw (B) -- (C);
\end{tikzpicture}
\caption{Choosing the appropriate $T$.}
\end{figure}
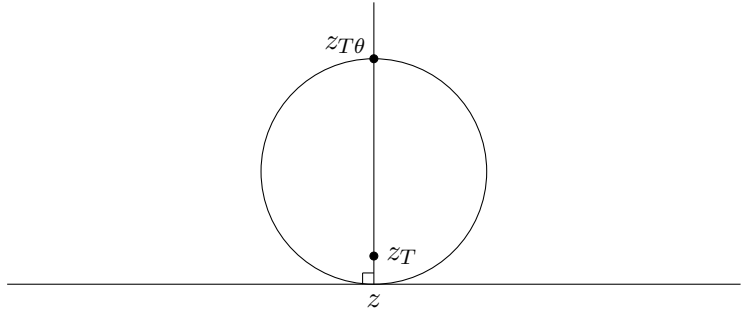
By Theorem \ref{Global}, we have 
\begin{align*}
\frac{\muball{z}{T\theta}}{\muball{z}{T}}  \approx \frac{e^{-\delta T\theta}}{e^{-\delta T}}\frac{1}{e^{-\rho(z,T)(\delta - \kmax)}} 
&\lesssim \left(e^{T(1-\theta)}\right)^{\delta} e^{(1-\epsilon)T(1-\theta)(\delta-\kmax)}\\
&= \left(e^{T(1-\theta)}\right)^{2\delta-\kmax-\epsilon(\delta-\kmax)}
\end{align*}
which proves $\lowspec \mu \leq 2\delta-\kmax-\epsilon(\delta-\kmax)$  and letting $\epsilon \rightarrow 0$ proves the upper bound.

Recall that $\low \mu = \min\{2\delta-\kmax,\kmin\}$, so when $ \delta \leq {(\kmin+\kmax)}/{2}$, we have 
\begin{equation*}
2\delta-\kmax = \low \mu   \leq \lowspec \mu \leq 2\delta-\kmax
\end{equation*}
so $\lowspec \mu = 2\delta-\kmax$, as required.

\subsection{The Assouad spectrum of $\lset$}\label{AsospecLimit}

\subsubsection {When $\delta \leq \kmin$}\label{AsospecLimit1} 
 We show \[\asospec \lset = \delta + \min\left\{1,\frac{\theta}{1-\theta}\right\}(\kmax-\delta).\]
As 
\begin{equation}\label{ubound}
\asospec \lset \leq \asospec \mu = \delta + \min\left\{1,\frac{\theta}{1-\theta}\right\}(\kmax-\delta)
\end{equation}
 when $\delta \leq \kmin$, we need only prove the lower bound. To obtain this, we make use of the following result (see \cite[Theorem 3.4.8]{Fr2}).
\begin{prop}\label{rho}
Let $F \subseteq \mathr^n$ and suppose that \[\rho =\inf \normalfont{\{\theta \in (0,1) \mid \asospec F = \aso F\}}\] exists and $\rho \in (0,1)$ and $ \normalfont{\low F = \ubox F}$. Then for $\theta \in (0,\rho)$,
\begin{equation*}
 \normalfont{\asospec F \geq \ubox F + \frac{(1-\rho)\theta}{(1-\theta)\rho}(\aso F-\ubox F)}.
\end{equation*}
\end{prop}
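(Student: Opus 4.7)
The plan is to translate the Assouad-type scaling behaviour at the critical parameter $\rho$ into Assouad-spectrum behaviour at any smaller $\theta$, by refining the witnessing configuration down to finer scales using the uniform lower-dimension bound. Concretely, the hypothesis $\low F = \ubox F$ produces dense $r$-packings inside balls of any intermediate radius $R$, while the definition of $\rho$ (together with continuity of $\theta \mapsto \asospec F$) guarantees that at some scale pair $(r_1, r_1^\rho)$ there is a point $x_0 \in F$ producing a packing of $B(x_0,r_1^\rho)\cap F$ of near-maximal Assouad-type size. Combining these two ingredients at appropriately linked scales will yield the required lower bound on $\asospec F$ for $\theta<\rho$.

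Fix $\theta \in (0,\rho)$ and small $\epsilon > 0$. By continuity of the Assouad spectrum and the definition of $\rho$, we have $\asospec F = \aso F$ at $\theta=\rho$; hence for arbitrarily small $r_1 > 0$ we may choose $x_0 \in F$ and an $r_1$-separated subset $P_1 \subseteq B(x_0, r_1^\rho) \cap F$ with $|P_1| \gtrsim (r_1^{\rho-1})^{\aso F - \epsilon}$. Now set $r_0 = r_1^{\rho/\theta}$, so that $r_0 < r_1$ and $r_1^\rho = r_0^\theta$. Using $\low F = \ubox F$, for each $y \in P_1$ we extract an $r_0$-separated set $P_y \subseteq B(y, r_1/4) \cap F$ with $|P_y| \gtrsim (r_1/r_0)^{\ubox F - \epsilon}$.

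The combining step hinges on the balls $\{B(y, r_1/4)\}_{y \in P_1}$ being pairwise at distance $\geq r_1/2 > r_0$ for $r_1$ small; this ensures that $P := \bigcup_{y \in P_1} P_y$ is itself $r_0$-separated, of cardinality $|P| \gtrsim (r_1^{\rho-1})^{\aso F - \epsilon}(r_1/r_0)^{\ubox F - \epsilon}$, and contained in $B(x_0, 2 r_0^\theta)$. Covering this enlarged ball by a bounded number of radius-$r_0^\theta$ balls centred in $F$ and pigeonholing produces some $x \in F$ with $M_{r_0}(B(x, r_0^\theta) \cap F) \gtrsim |P|$, which after passing to $N_{r_0}$ (comparable up to constants) feeds directly into the definition of $\asospec F$.

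The remainder is algebra: substituting $r_0 = r_1^{\rho/\theta}$, taking logarithms, and comparing with $(r_0^{\theta-1})^s$ yields a linear inequality in $s$ whose division by $\rho(\theta-1)/\theta < 0$ flips the sign and, after letting $\epsilon \to 0$, delivers the advertised bound $\asospec F \geq \ubox F + \frac{(1-\rho)\theta}{(1-\theta)\rho}(\aso F - \ubox F)$. The main obstacle is not the algebra but the bookkeeping in the combining step: justifying the slightly wasteful $r_1/4$ (in place of $r_1/2$) to guarantee $r_0$-separation across the different $P_y$, and invoking continuity of the Assouad spectrum to locate the required witness at $\theta = \rho$ itself rather than only at values strictly exceeding $\rho$.
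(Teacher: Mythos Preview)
The paper does not prove Proposition~\ref{rho}; it is quoted as \cite[Theorem 3.4.8]{Fr2} and used as a black box. Your argument is correct and is essentially the standard two-scale proof one would expect: witness the Assouad behaviour at the pair $(r_1,r_1^\rho)$, refine each witness point down to scale $r_0=r_1^{\rho/\theta}$ using $\low F=\ubox F$, and pigeonhole into a single ball of radius $r_0^\theta$. The algebra you indicate does indeed produce the claimed bound. Two small remarks: your phrase ``for arbitrarily small $r_1$ we may choose'' should read ``there exist arbitrarily small $r_1$ for which we may choose'' (the witnessing scales form a sequence tending to $0$, not every small $r_1$ works), and the appeal to continuity of $\theta\mapsto\asospec F$ to obtain $\dim_{\mathrm A}^\rho F=\aso F$ is legitimate but worth making explicit since it is the only place the infimum in the definition of $\rho$ is actually attained.
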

We note that if $\delta \leq \kmin$, then certainly \[\low \lset = \min\{\kmin,\delta\} = \delta = \ubox \lset.\] We now show that $\rho = {1}/{2}$.
By (\ref{ubound}), we have $\rho \geq {1}/{2}$, so we need only show that $\rho \leq {1}/{2}$. To do this, we recall a result from \cite{Fr1}, which states that using the orbit of a free abelian subgroup of the stabiliser of some parabolic fixed point $p$ with $k(p) = \kmax$, we can find a bi-Lipschitz image of an inverted $\mathz^{\kmax}$ lattice inside $\lset$ in $\mathbb{S}^d$, which gives us
\begin{equation}\label{lattice}
\asospec \lset \geq \asospec \frac{1}{\mathz^{\kmax}} = \min\left\{\kmax,\frac{\kmax}{2(1-\theta)}\right\}.
\end{equation}
The final equality giving the Assouad spectrum of $1/\mathbb{Z}^{\kmax}$ is a straightforward calculation which we omit, but  note that this can be derived along similar lines (and giving the same formula) as the treatment of the product set $(1/\mathbb{N})^{\kmax}$ given in \cite[Prop 4.5, Cor 6.4]{FYu}. This proves $\rho \leq {1}/{2}$, as required. 
Therefore, by Proposition \ref{rho}, we have for $\theta \in (0,{1}/{2})$
\begin{align*}
\asospec \lset \geq  \ubox \lset + \frac{(1-\rho)\theta}{(1-\theta)\rho}(\aso \lset-\ubox \lset) 
 = \delta + \frac{\theta}{1-\theta}(\kmax-\delta)
\end{align*}
as required.

\subsubsection{When $\kmin < \delta < \kmax$}\label{AsospecLimit2}
 \textit{Upper bound}: We show \[\asospec \lset \leq \delta + \min\left\{1,\frac{\theta}{1-\theta}\right\}(\kmax-\delta).\]
The argument for the upper bound for the Assouad spectrum is nearly identical to the argument for the upper bound for the Assouad dimension given in \cite{Fr1}, and so we omit any part of the argument which does not improve upon the bounds provided in the paper, in particular when the Assouad dimension is bounded above by $\delta$. We also note that
\[\asospec \lset \leq \aso \lset = \kmax\]
so we may assume that $\theta \in (0,1/2)$. 
Let $z \in \lset$, $\epsilon > 0$, and $T$ be sufficiently large such that $T(1-\theta) \geq \max\{\epsilon^{-1},\text{log}10\}$. Let $\{x_i\}_{i \in X}$ be a centred $e^{-T}$-packing of $B(z,e^{-T\theta}) \cap \lset$ of maximal cardinality, in other words $x_i \in B(z,e^{-T\theta}) \cap \lset$ for all $i \in X$ and $\vert x_i - x_j \vert > 2e^{-T}$ for $i \neq j$. Decompose $X$ as follows
\begin{equation*}
X = X_0 + X_1 + \bigcup_{n=2}^{\infty} X_n
\end{equation*}
where
\begin{align*}
X_0 &= \{i \in X \mid (x_i)_T \in H_p \ \text{with} \ \vert H_p \vert \geq 10e^{-T\theta}\}\\
X_1 &= \{i \in X\setminus X_0 \mid \rho(x_i,T) \leq \epsilon T(1-\theta)\}\\
X_n &= \{i \in X \setminus (X_0 \cup X_1) \mid n-1 < \rho(x_i,T) \leq n\}.
\end{align*}
Our goal is to bound from above the cardinalities of $X_0$, $X_1$ and $X_n$ separately.

We start with $X_0$, which we may assume is non-empty as we are trying to bound from above. We note that if there exists some $p \in P$ with $\vert H_p \vert \geq 10e^{-T\theta}$ and $H_p \cap \left(\cup_{i \in X} (x_i)_T\right) \neq \emptyset$, then this $p$ must be unique, i.e. if $H_p \cap \left(\cup_{i \in X}(x_i)_T\right) \neq \emptyset$ and $H_{p'} \cap \left(\cup_{i \in X}(x_i)_T\right) \neq \emptyset$ for some $p,p' \in P$ with $\vert H_p \vert, \vert H_{p'} \vert  \geq 10e^{-T\theta}$, then $H_p$ and $H_{p'}$ could not be disjoint. This means we can choose $p \in P$ such that $(x_i)_T \in H_p$ for all $i \in X_0$, and also note that this forces $z_{T\theta} \in H_p$.

If $\delta \leq k(p)$, then by Theorem \ref{Global}
\begin{align*}
e^{-T\theta \delta}e^{-\rho(z,T\theta)(\delta-k(p))}  \gtrsim \muball{z}{T\theta}  
\geq \mu (\cup_{i \in X_0} B(x_i,e^{-T}))
\gtrsim \vert X_0 \vert \min_{i \in X_0} (e^{-T})^{\delta} e^{-\rho(x_i,T)(\delta-k(p))}
\end{align*}
where the second inequality follows from the fact that $\{x_i\}_{i \in X_0}$ is an $e^{-T}$ packing. Therefore
\begin{align}
\vert X_0 \vert   \lesssim \max_{i \in X_0} \left(e^{T(1-\theta)}\right)^{\delta} e^{(\rho(x_i,T)-\rho(z,T\theta))(\delta-k(p))} \nonumber
&\leq \left(e^{T(1-\theta)}\right)^{\delta} e^{-T\theta(\delta-\kmax)} \nonumber \\
&= \left(e^{T(1-\theta)}\right)^{\delta+\frac{\theta}{1-\theta}(\kmax - \delta)} \label{ubound1}.
\end{align}
Note that Fraser \cite{Fr1} estimates using $\rho(x_i,T)-\rho(z,t) \leq T-t+10$, but we can improve this for $t = T\theta$ with $\theta < {1}/{2}$ by using $\rho(x_i,T) \geq 0$ and $\rho(z,T\theta) \leq T\theta$.

If $\delta > k(p)$, then we refer the reader to \cite[4998-5000]{Fr1}, where it is shown that \[\vert X_0 \vert \lesssim \left(e^{T(1-\theta)}\right)^{\delta}.\]
Therefore, we have \[ \vert X_0 \vert \lesssim  \left(e^{T(1-\theta)}\right)^{\delta+\frac{\theta}{1-\theta}(\kmax - \delta)}\] regardless of the relationship between $\delta$ and $k(p)$.

For $X_1$, we have 
\begin{align*}
e^{-T\theta \delta}e^{-\rho(z,T\theta)(\delta-k(z,T\theta))}  \gtrsim \muball{z}{T\theta}  
\geq \mu (\cup_{i \in X_1} B(x_i,e^{-T}))
&\gtrsim \sum_{i \in X_1}  (e^{-T})^{\delta} e^{-\rho(x_i,T)(\delta-k(x_i,T))}\\
&\geq \vert X_1 \vert (e^{-T})^{\delta} e^{-\epsilon T(1-\theta)(\delta-\kmin)}
\end{align*}
by the definition of $X_1$ and the fact that $\delta > \kmin$. Therefore
\begin{align}
\vert X_1 \vert \nonumber \lesssim  \left(e^{T(1-\theta)}\right)^{\delta} e^{\epsilon T(1-\theta)(\delta-\kmin)-\rho(z,T\theta)(\delta-k(z,T\theta))} 
&\leq \left(e^{T(1-\theta)}\right)^{\delta} e^{\epsilon T(1-\theta)(\delta-\kmin)-T\theta(\delta-\kmax)}\nonumber \\
&= \left(e^{T(1-\theta)}\right)^{\delta + \frac{\theta}{1-\theta}(\kmax - \delta)+\epsilon (\delta-\kmin)}\label{ubound2}.
\end{align}

Finally, we consider the sets $X_n$. If $i \in X_n$ for $n \geq 2$, then $\rho(x_i,T) > n-1$, and so $(x_i)_T \in H_p$ for some $p \in P$ with $e^{-T} \leq \vert H_p \vert < 10e^{-T\theta}$. Furthermore, we may note that $B(x_i,e^{-T})$ is contained in the shadow at infinity of the squeezed horoball $2e^{-(n-1)}H_p.$ Also note that as $\vert H_p \vert < 10e^{-T\theta}$, we have $p \in B(z,10e^{-T\theta})$. For each integer $k \in [\kmin,\kmax]$ define
\[X_n^k = \{ i \in X_n \mid k(x_i,T) = k \}.\]
Then
\begin{align*}
\mu \left(\bigcup_{i \in X_n^k} B(x_i,e^{-T}) \right) &\leq \mu \left(\bigcup_{\substack{p \in P \cap B(z,10e^{-T\theta}) \\ 10e^{-T\theta} > \vert H_p \vert \geq e^{-T}\\ k(p)=k}} \Pi\left(2e^{-(n-1)}H_p\right)\right)\\
&\lesssim \sum_{\substack{p \in P \cap B(z,10e^{-T\theta}) \\ 10e^{-T\theta} > \vert H_p \vert \geq e^{-T}\\ k(p)=k}} \mu(\Pi(2e^{-(n-1)}H_p))\\
&\lesssim e^{-n(2\delta-k)} \sum_{\substack{p \in P \cap B(z,10e^{-T\theta}) \\ 10e^{-T\theta} > \vert H_p \vert \geq e^{-T}}} \vert H_p \vert^{\delta} \qquad  \text{by Lemma} \ \ref{Squeeze}\\
&\lesssim e^{-n(2\delta-k)} (T(1-\theta)+ \ \text{log}10) \muball{z}{T\theta}  \\
&\lesssim e^{-n(2\delta-k)} T(1-\theta) e^{-T\theta\delta} e^{-\rho(z,T\theta)(\delta-k(z,T\theta))} \\
&\lesssim e^{-n(2\delta-k)} \epsilon^{-1}n  e^{-T\theta\delta} e^{T\theta(\kmax-\delta)} 
\end{align*}
where the last three inequalities use Theorem \ref{Global}, Lemma \ref{CountHoro} and the fact that $\epsilon T(1-\theta) < \rho(x_i,T) \leq n$ as $i \notin X_1$. On the other hand, using the fact that $\{x_i\}_{i \in X_n^k}$ is an $e^{-T}$ packing, 
\begin{align*}
\mu \left(\bigcup_{i \in X_n^k} B(x_i,e^{-T}) \right) \geq \sum_{i \in X_n^k} \mu(B(x_i,e^{-T})) \gtrsim \vert X_n^k \vert e^{-T\delta} e^{-n(\delta-k)}
\end{align*}
using $n-1 < \rho(x_i,T) \leq n$. Therefore
\begin{align*}
\vert X_n^k \vert  \lesssim \epsilon^{-1}n e^{-n\delta} \left(e^{T(1-\theta)}\right)^{\delta}   e^{T\theta(\kmax-\delta)} 
= \epsilon^{-1}n e^{-n\delta}  \left(e^{T(1-\theta)}\right)^{\delta + \frac{\theta}{1-\theta}(\kmax - \delta)}
\end{align*}
which implies
\begin{align}\label{ubound3}
\vert X_n \vert = \sum_{k=\kmin}^{\kmax} \vert X_n^k \vert
\lesssim \epsilon^{-1}n e^{-n\delta}  \left(e^{T(1-\theta)}\right)^{\delta + \frac{\theta}{1-\theta}(\kmax - \delta)}.
\end{align}
Combining (\ref{ubound1}), (\ref{ubound2}) and (\ref{ubound3}), we have
\begin{align*}
\vert X \vert &= \vert X_0 \vert + \vert X_1 \vert + \sum_{n=2}^{\infty} \vert X_n \vert\\
&\lesssim  \left(e^{T(1-\theta)}\right)^{\delta + \frac{\theta}{1-\theta}(\kmax - \delta)+\epsilon (\delta-\kmin)} +  \sum_{n=2}^{\infty} \epsilon^{-1}n e^{-n\delta}  \left(e^{T(1-\theta)}\right)^{\delta + \frac{\theta}{1-\theta}(\kmax - \delta)}\\
&\lesssim  \left(e^{T(1-\theta)}\right)^{\delta + \frac{\theta}{1-\theta}(\kmax - \delta)+\epsilon (\delta-\kmin)} +  \epsilon^{-1}\left(e^{T(1-\theta)}\right)^{\delta + \frac{\theta}{1-\theta}(\kmax - \delta)}
\end{align*}
which proves \[\asospec \lset \leq \delta + \frac{\theta}{1-\theta}(\kmax - \delta)+\epsilon (\delta-\kmin)\]
and letting $\epsilon \rightarrow 0$ proves the desired upper bound.

 \textit{Lower bound}: We show \[\asospec \lset \geq \delta + \min\left\{1,\frac{\theta}{1-\theta}\right\}(\kmax-\delta).\]
Note that the case when $\theta \geq 1/2$ is an immediate consequence of (\ref{lattice}), so we assume that $\theta \in (0,1/2)$.
In order to derive the lower bound, we let $p \in \lset$ be a parabolic fixed point with $k(p) = \kmax$, then we switch to the upper half-space model $\mathbb{H}^{d+1}$ and assume that $p = \infty$. Recalling the argument from \cite[Section 5.1.1]{Fr1}, we know that there exists some subset of $\lset$ which is bi-Lipschitz equivalent to $\mathbb{Z}^{\kmax}$. Therefore, we may assume that we have a $\mathbb{Z}^{\kmax}$ lattice, denoted by $Z$, as a subset of $\lset$, and then use the fact that the Assouad spectrum is stable under bi-Lipschitz maps.

We now partition this lattice into $\{Z_k\}_{k \in \mathbb{N}}$, where \[Z_k = \{z \in Z \mid 10(k-1) \leq \vert z \vert < 10k\} .\]
We note that 
\begin{equation}\label{ZK1}
\vert Z_k \vert \approx k^{\kmax-1}.
\end{equation}
Let $\phi : \mathbb{H}^{d+1} \rightarrow \mathbb{D}^{d+1}$ denote the Cayley transformation mapping the upper half space model to the Poincar\'e ball model, and consider the family of balls $\{B(z,{1}/{3})\}_{z \in Z}$.  Taking the $\phi$-image of $Z$ yields an inverted lattice and  there are positive constants $C_1$ and $C_2$ such that if
$z \in Z_k$ for some $k$, then
\begin{align*}
\frac{1}{C_1k} \leq \vert \phi(z) - p_1 \vert \leq \frac{C_1}{k} \qquad 
\text{and} \qquad   \frac{1}{C_2k^2} \leq \vert \phi(B(z,{1}/{3})) \vert \leq \frac{C_2}{k^2},
\end{align*}
where $p_1 = \phi(\infty)$.

Let $T>0$. We now choose a constant $C_3$ small enough which satisfies the following:
\begin{itemize}
\item The set of balls $\bigcup_{k \in \mathbb{N}}\bigcup_{z \in Z_k} (B(\phi(z),C_3/{k^2}))$ are pairwise disjoint.
\item If $z \in Z_k$ and $\vert \phi(z) - p_1 \vert < e^{-T\theta}$, then $B(\phi(z),C_3/{k^2}) \subset B(p_1, 2e^{-T\theta})$.
\end{itemize}
This gives us
\begin{equation}\label{covering1}
N_{e^{-T}}(B(p_1,2e^{-T\theta}) \cap \lset) \gtrsim \sum_{\substack{z \in Z_k  \\ {C_3}/{k^2} >  e^{-T} \\ \vert \phi(z) - p_1 \vert < e^{-T\theta}}} N_{e^{-T}}(B(\phi(z),C_3/k^2) \cap \lset).
\end{equation}
We wish to estimate 
\[
N_{e^{-T}}(B(\phi(z),C_3/{k^2}) \cap \lset)
\]
 from below. Let $k \in \mathbb{N}$ satisfy the conditions given in the sum above, and write $t =\text{log}({k^2}/C_3)$, and let $\epsilon \in (0,1)$.  We  will restrict the above sum to ensure that   $T-t \geq \max\{\epsilon^{-1},\text{log}10\}$, that is, we assume further that $k \leq a(\epsilon)e^{T/2} \sqrt{C_3}$ where $a(\epsilon)= \sqrt{\exp(-\max\{\epsilon^{-1},\text{log}10\})}$. 

Let $\{y_i\}_{i \in Y}$ be a centred $e^{-T}$ covering of $B(\phi(z),e^{-t}) \cap \lset$, and decompose $Y$ as $Y = Y_0 \cup Y_1$ where
\begin{align*}
Y_0 &= \{i \in Y \mid (y_i)_T \in H_p \ \text{with} \ \vert H_p \vert \geq 10e^{-t}\}\\
Y_1 &= Y\setminus Y_0.
\end{align*}
We wish to estimate the cardinalities of $Y_0$ and $Y_1$, and we refer the reader to \cite[Section 5.2]{Fr1} for the details regarding the estimates used.

If $i \in Y_0$, then $(y_i)_T \in H_p$ for some unique parabolic fixed point $p$ with $\vert H_p \vert \geq 10e^{-t}$. If $ \delta < k(p)$, it can be shown that
\[\vert Y_0 \vert \gtrsim \left(\frac{e^{-t}}{e^{-T}}\right)^{\delta}\]
and if $\delta \geq k(p)$ then
\begin{align}\label{card1}
\vert Y_0 \vert   \gtrsim \left(\frac{e^{-t}}{e^{-T}}\right)^{\delta} \min_{i \in Y_0} e^{(\rho(y_i,T)-\rho(\phi(z),t))(\delta-k(p))} 
 \gtrsim \left(\frac{e^{-t}}{e^{-T}}\right)^{\delta} e^{-\rho(\phi(z),t)(\delta-k(p))}.
\end{align}
In order to estimate $\rho(\phi(z),t)$, we ensure that $T$ is chosen large enough such that the line joining \textbf{0} and $\phi(z)$ intersects $H_{p_1}$, and let $u>0$ be the larger of two values such that $\phi(z)_u$ lies on the boundary of $H_{p_1}$ (see Figure \ref{fig1}). Then note that \[\vert \phi(z) - \phi(z)_t \vert \approx e^{-t} = C_3/{k^2}\] and also that since $ \vert \phi(z) - p_1 \vert \approx \frac{1}{k}$,  Lemma \ref{Circle} gives $\vert \phi(z) -  \phi(z)_u \vert \approx 1/k^2.$
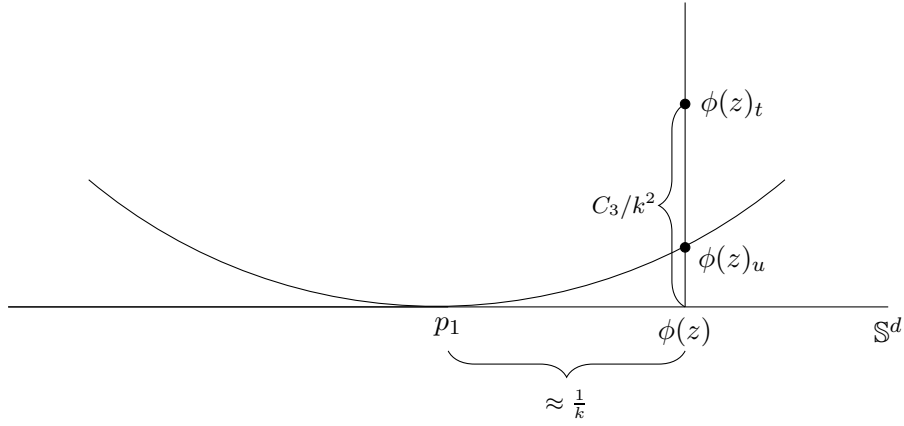
\begin{figure}[H]
\centering
\begin{tikzpicture}[scale=0.9]
\draw (1.19,1.88) arc (230:310:8cm);
\draw (0,0) -- (13,0) node[right,below] {$\mathbb{S}^d$};
\draw (0,0) -- (6.5,0) node[below]{$p_1$};
\draw (10,4.5) -- (10,0) node[below]{$\phi(z)$};
\node at (10.7,3) {$\phi(z)_t$};
\node at (10.7,0.7) {$\phi(z)_u$};
\filldraw[black] (10,3) circle (2pt);
\filldraw[black] (10,0.88) circle (2pt);
\draw [decorate,decoration={brace,mirror,amplitude=10pt},xshift=0pt,yshift=0pt]
(6.5,-0.65) -- (10,-0.65) node [black,midway,yshift=-20pt] {\footnotesize
$\approx \frac{1}{k}$};
\draw [decorate,decoration={brace,amplitude=10pt},xshift=0pt,yshift=0pt]
(10,0) -- (10,3) node [black,midway,xshift=-23pt] {\footnotesize
$C_3/{k^2}$};
\end{tikzpicture}
\caption{Estimating $\rho(\phi(z),t)$ when $t<u$ using Lemma \ref{Circle}.}

\label{fig1}

\end{figure}
Therefore, we have
\begin{align}
\rho(\phi(z),t) &\leq  \text{log}\frac{\vert \phi(z) - \phi(z)_t \vert}{\vert \phi(z) -  \phi(z)_u \vert}
 \leq C_4 \label{rhoc4}
\end{align}
for some constant $C_4$. Therefore, by (\ref{card1})
\begin{align*}
\vert Y_0 \vert &\gtrsim \left(\frac{e^{-t}}{e^{-T}}\right)^{\delta} e^{-C_4(\delta-\kmin)}
\approx  \left(\frac{e^{-t}}{e^{-T}}\right)^{\delta}.
\end{align*}
Similarly, we can show that 
\begin{align*}
\vert Y_1 \vert \gtrsim \left(\frac{e^{-t}}{e^{-T}}\right)^{\delta+\epsilon(\delta-\kmax)} e^{-\rho(\phi(z),t)(\delta-\kmin)} \gtrsim \left(\frac{e^{-t}}{e^{-T}}\right)^{\delta+\epsilon(\delta-\kmax)}.
\end{align*}
Therefore, we have
\begin{align*}
N_{e^{-T}}(B(\phi(z),e^{-t}) \cap \lset) \gtrsim \left(\frac{e^{-t}}{e^{-T}}\right)^{\delta+\epsilon(\delta-\kmax)} 
= \left(\frac{C_3/{k^2}}{e^{-T}}\right)^{\delta+\epsilon(\delta-\kmax)}
\end{align*}
and so by (\ref{covering1})
\begin{align*}
  N_{e^{-T}}(B(p_1,2e^{-T\theta}) \cap \lset)
 &\gtrsim  \sum_{\substack{z \in Z_k  \\ {C_3}/{k^2} >  e^{-T} \\ \vert \phi(z) - p_1 \vert < e^{-T\theta}}}  \left(\frac{C_3/{k^2}}{e^{-T}}\right)^{\delta+\epsilon(\delta-\kmax)} \\
&\geq e^{T(\delta+\epsilon(\delta-\kmax))} \sum_{k = \lceil e^{T\theta}/C_1 \rceil}^{\lfloor a(\epsilon)\sqrt{C_3}e^{T/2} \rfloor} \vert Z_k \vert  \left(C_3/{k^2}\right)^{\delta+\epsilon(\delta-\kmax)} \\
&\gtrsim e^{T(\delta+\epsilon(\delta-\kmax))} \sum_{k = \lceil e^{T\theta}/C_1 \rceil}^{\lfloor a(\epsilon)\sqrt{C_3}e^{T/2} \rfloor} k^{\kmax-1-2\delta-2\epsilon(\delta-\kmax)}  \\
&\gtrsim e^{T(\delta+\epsilon(\delta-\kmax))} \int_{e^{T\theta}/C_1 }^{a(\epsilon) \sqrt{C_3}e^{T/2} } x^{\kmax-1-2\delta-2\epsilon(\delta-\kmax)} dx
\end{align*}
where the second last inequality uses (\ref{ZK1}). We may assume $\epsilon$ is chosen small enough to ensure that $\kmax-2\delta-2\epsilon(\delta-\kmax)<0$ and that $T$ is large enough (depending on $\epsilon$) such that $a(\epsilon) \sqrt{C_3}e^{T/2}>e^{T\theta}/C_1 $. Then the smaller limit of integration provides the dominant term, so we have
\begin{align*}
N_{e^{-T}}(B(p_1,2e^{-T\theta}) \cap \lset)  &\gtrsim  e^{T(\delta+\epsilon(\delta-\kmax))} e^{T\theta(\kmax-2\delta-2\epsilon(\delta-\kmax))}\\
&= \left(e^{T(1-\theta)}\right)^{\delta+\frac{\theta}{1-\theta}(\kmax-\delta)+\epsilon(\delta-\kmax)} e^{-T\theta \epsilon(\delta-\kmax)} \\
&\geq  \left(e^{T(1-\theta)}\right)^{\delta+\frac{\theta}{1-\theta}(\kmax-\delta)+\epsilon(\delta-\kmax)}
\end{align*}
which proves \[\asospec \lset \geq \delta+\frac{\theta}{1-\theta}(\kmax-\delta)+\epsilon(\delta-\kmax)\] and letting $\epsilon \rightarrow 0$ proves the lower bound.

\subsubsection{When $\delta \geq \kmax$}\label{AsospecLimit3}  We show $\asospec \lset = \delta.$ This follows easily, since  when $\delta \geq \kmax$, we have 
\[
\delta = \ubox \lset \leq  \asospec \lset \leq \aso \lset = \delta
\]
 and so $\asospec \lset = \delta$, as required.

\subsection{The lower spectrum of $\lset$}\label{LowspecLimit}

\subsubsection{When $\delta \leq \kmin$}\label{LowspecLimit1}  We show $\lowspec \lset = \delta.$ When $\delta \leq \kmin$, we have 
\[
\delta = \low \lset \leq \lowspec \lset \leq \lbox \lset = \delta
\]
 and so $\lowspec \lset = \delta$, as required.

\subsubsection{When $\kmin < \delta < \kmax$}\label{LowspecLimit2} 
 \textit{Lower bound}: We show \[\lowspec\lset \geq  \delta - \min\left\{1,\frac{\theta}{1-\theta}\right\}(\delta-\kmin).\]
Similarly to the upper bound for the Assouad spectrum of $\lset$, the argument is essentially identical to the one given in \cite{Fr1} for the lower bound for the lower dimension, and so we only include parts of the argument which improve upon the bounds derived in that paper, and we may assume that $\theta \in (0,1/2)$.

Let $z \in \lset$, $\epsilon \in (0,1)$, and $T>0$ with $T(1-\theta) \geq \max\{\epsilon^{-1},\text{log}10\}$. Let $\{y_i\}_{i \in Y}$ be a centred $e^{-T}$-covering of $B(z,e^{-T\theta}) \cap \lset$ of minimal cardinality. Decompose $Y$ as $Y = Y_0 \cup Y_1$
where
\begin{align*}
Y_0 &= \{i \in Y \mid (y_i)_T \in H_p \ \text{with} \ \vert H_p \vert \geq 10e^{-T\theta}\}\\
Y_1 &= Y\setminus Y_0.
\end{align*}
As $\{y_i\}_{i \in Y}$ is a centred $e^{-T}$-covering of $B(z,e^{-T\theta})  \cap \lset$, we have
\begin{align}
\muball{z}{T\theta}  \leq \mu \left(\cup_{i \in Y} B(y_i,e^{-T}) \right)
\leq \mu \left(\cup_{i \in Y_0} B(y_i,e^{-T}) \right) + \mu \left(\cup_{i \in Y_1} B(y_i,e^{-T}) \right)\label{decomp}
\end{align}
so one of the terms in (\ref{decomp}) must be at least $\muball{z}{T\theta}/2$.

Suppose that \[\mu \left(\cup_{i \in Y_0} B(y_i,e^{-T}) \right) \geq \muball{z}{T\theta}/2.\] This means that $\vert Y_0 \vert \neq \emptyset$, and so similar to the Assouad spectrum argument, we can show that there exists a unique $p \in P$ with $\vert H_p \vert \geq 10e^{-T\theta}$ such that $(y_i)_T \in H_p$ for all $i \in Y_0$. Moreover,this forces $z_{T\theta} \in H_p$. 

If $\delta \geq k(p)$, then
\begin{align*}
e^{-T\theta \delta} e^{-\rho(z,T\theta)(\delta - k(p))} \lesssim \muball{z}{T\theta} 
\leq 2\mu \left(\cup_{i \in Y_0} B(y_i,e^{-T}) \right) 
\lesssim \vert Y_0 \vert \max_{i \in Y_0} e^{-T\delta} e^{-\rho(y_i,T)(\delta - k(p))}
\end{align*}
so we have
\begin{align*}
\vert Y_0 \vert  \gtrsim \left(e^{T(1-\theta)}\right)^{\delta} \min_{i \in Y_0} e^{(\rho(y_i,T)-\rho(z,T\theta))(\delta-k(p))} 
\gtrsim \left(e^{T(1-\theta)}\right)^{\delta} e^{-T\theta(\delta-\kmin)} 
= \left(e^{T(1-\theta)}\right)^{\delta-\frac{\theta}{1-\theta}(\delta-\kmin)}.
\end{align*}
If $\delta < k(p)$, we refer the reader to \cite[5003-5005]{Fr1} where it is shown that \[\vert Y_0 \vert \gtrsim \left(e^{T(1-\theta)}\right)^{\delta}.\]
Therefore
\begin{equation}\label{lbound1}
\vert Y_0 \vert \gtrsim \left(e^{T(1-\theta)}\right)^{\delta-\frac{\theta}{1-\theta}(\delta-\kmin)} 
\end{equation}
regardless of the relationship between $\delta$ and $k(p)$.
Now, suppose that \[\mu \left(\cup_{i \in Y_1} B(y_i,e^{-T}) \right) \geq \muball{z}{T\theta}/2.\]
Define \[Y_1^{0} = \{i \in Y_1 \mid \rho(y_i,T) \leq \epsilon T(1-\theta)\}\]
and suppose $i \in Y_1 \setminus Y_1^0$. Then $\rho(y_i,T) > \epsilon T(1-\theta)$ and so similar to the $X_n$ case in the Assouad spectrum argument, we know that $(y_i)_T$ is contained in the squeezed horoball $e^{-\epsilon T(1-\theta)}H_p$ with $e^{-T} \leq \vert H_p \vert < 10e^{-T\theta}$ for some $p \in P$ with $p \in B(z,10e^{-T\theta})$.

We also note that the Euclidean distance from $(y_i)_T$ to $\mathbb{S}^d$ is comparable to $e^{-T}$, and so by Pythagoras' Theorem, see Figure \ref{pythagfig}, we have
\begin{align*}
\vert y_i - p \vert  \lesssim \sqrt{(e^{-\epsilon T(1-\theta)}\vert H_p \vert)^2 - (e^{-\epsilon T(1-\theta)}\vert H_p \vert - e^{-T})^2} \lesssim \sqrt{e^{-\epsilon T(1-\theta)}\vert H_p \vert e^{-T}}.
\end{align*}

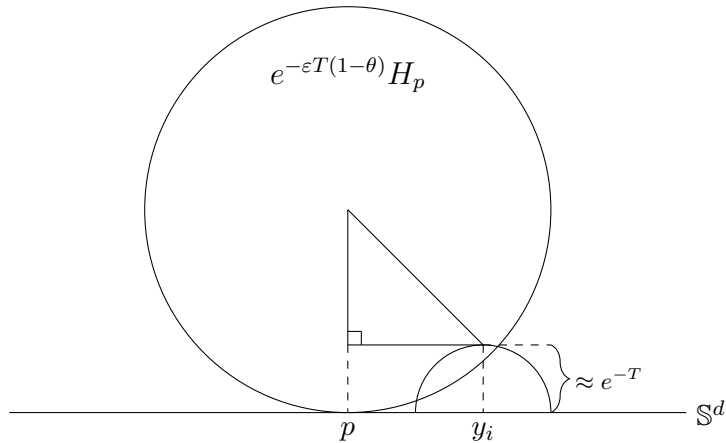
\begin{figure}[H]
\centering
\begin{tikzpicture}[scale=0.9]
\node at (5,5) {\large $e^{-\epsilon T(1-\theta)}H_p$};
\draw (0,0) -- (10,0) node[right] {$\mathbb{S}^d$};
\draw (5,3) -- (7,1);
\draw (7,1) -- (5,1);
\draw (5,1) -- (5,3);
\draw[dashed] (7,1) -- (7,0) node[below] {$y_i$};
\draw[dashed] (5,1) -- (5,0) node[below] {$p$};
\draw[dashed] (7,1) -- (8,1);
\draw (5,3) circle (3cm);
\draw (8,0) arc (0:180:1cm);
\coordinate (A) at (5.2,1) {};
\coordinate (B) at (5.2,1.2) {};
\coordinate (C) at (5,1.2) {};
\draw (A) -- (B);
\draw (B) -- (C);
\draw [decorate,decoration={brace,amplitude=7pt},xshift=0pt,yshift=0pt]
(8,1) -- (8,0)node [black,midway,xshift=22pt] {\footnotesize
$\approx e^{-T}$};
\end{tikzpicture}
\caption{Applying Pythagoras' Theorem using $(y_i)_T$, $p_T$ and the centre of $e^{-\epsilon T(1-\theta)} H_p$.} 

\label{pythagfig}

\end{figure}
Therefore, we have that $B(y_i, e^{-T})$ is contained in the shadow at infinity of the squeezed horoball 
\[C\sqrt{\frac{e^{-\epsilon T(1-\theta)-T}}{\vert H_p \vert}}H_p\]
for some constant $C$. By Lemma \ref{Squeeze} and Lemma \ref{CountHoro}, we have 
\begin{align*}
\mu \left(\cup_{i \in Y_1 \setminus Y_0^1} B(y_i,e^{-T}) \right) &\leq \sum_{\substack{p \in P \cap B(z,10e^{-T\theta}) \\ 10e^{-T\theta} > \vert H_p \vert \geq e^{-T}}} \mu \left( \Pi \left(C\sqrt{\frac{e^{-\epsilon T(1-\theta)-T}}{\vert H_p \vert}}H_p\right)\right)\\
&\approx  \sum_{\substack{p \in P \cap B(z,10e^{-T\theta}) \\ 10e^{-T\theta} > \vert H_p \vert \geq e^{-T}}} \left(\sqrt{\frac{e^{-\epsilon T(1-\theta)-T}}{\vert H_p \vert}}\right)^{2\delta-k(p)} \vert H_p \vert^{\delta}  \\
&\leq e^{-\epsilon T(1-\theta)(\delta-\kmax/2)}  \sum_{\substack{p \in P \cap B(z,10e^{-T\theta}) \\ 10e^{-T\theta} > \vert H_p \vert \geq e^{-T}}} \vert H_p \vert^{\delta}\\
&\lesssim e^{-\epsilon T(1-\theta)(\delta-\kmax/2)}(T(1-\theta)+\text{log}10)\muball{z}{T\theta}.
\end{align*}
As $\delta > \kmax/2$, this means that balls with centres in $Y_1 \setminus Y_1^0$ cannot carry a fixed proportion of $\mu(B(z,e^{-T\theta}))$ for sufficiently large $T$. It follows that
\begin{align*}
\mu \left(\cup_{i \in Y_0^1} B(y_i,e^{-T}) \right)  \approx \mu \left(\cup_{i \in Y_1} B(y_i,e^{-T}) \right) 
\geq \mu(B(z,e^{-T\theta}))/2 
\end{align*}
by our assumption.  This means that
\begin{align*}
e^{-T\theta \delta}e^{-\rho(z,T\theta)(\delta-k(z,T\theta))}  \lesssim \muball{z}{T\theta}  &\leq \mu (\cup_{i \in Y_1^0} B(y_i,e^{-T}))\\
&\lesssim \sum_{i \in Y_1^0}  (e^{-T})^{\delta} e^{-\rho(y_i,T)(\delta-k(y_i,T))}\\
&\leq \vert Y_1^0 \vert (e^{-T})^{\delta} e^{\epsilon T(1-\theta)(\kmax-\delta)}
\end{align*}
using $\rho(y_i,T) \leq \epsilon T(1-\theta)$ for $i \in Y_1^0$ and $\delta < \kmax$. Therefore
\begin{align}
\vert Y_1 \vert  & \geq \vert Y_1^0 \vert  \gtrsim \left(e^{T(1-\theta)}\right)^{\delta} e^{\epsilon T(1-\theta)(\delta-\kmax)} e^{-T\theta(\delta-\kmin)}
=\left(e^{T(1-\theta)}\right)^{\delta -\frac{\theta}{1-\theta}(\delta-\kmin)+\epsilon (\delta-\kmax)}.\label{lbound2}
\end{align}
At least one of (\ref{lbound1}) and (\ref{lbound2}) must hold, so we have
\begin{equation*}
\vert Y \vert \geq \vert Y_0 \vert + \vert Y_1 \vert \gtrsim \left(e^{T(1-\theta)}\right)^{\delta -\frac{\theta}{1-\theta}(\delta-\kmin)+\epsilon (\delta-\kmax)}
\end{equation*}
which proves \[\lowspec \lset \geq \delta -\frac{\theta}{1-\theta}(\delta-\kmin)+\epsilon (\delta-\kmax)\]
and letting $\epsilon \rightarrow 0$ proves the desired lower bound.

 \textit{Upper bound}: We show \[\lowspec\lset \leq  \delta - \min\left\{1,\frac{\theta}{1-\theta}\right\}(\delta-\kmin).\]
In order to obtain the upper bound in this case, we require the following technical lemma due to Bowditch \cite{BO}. Switch to the upper half space model $\mathbb{H}^{d+1}$, and let $p \in P$ be a parabolic fixed point with rank $\kmin$. We may assume by conjugation that $p = \infty$. A consequence of geometric finiteness is that the limit set can be decomposed into the disjoint union of a set of conical limit points and a set of bounded parabolic fixed points (this result was proven in dimension 3 partially by Beardon and Maskit \cite{BM} and later refined by Bishop \cite{BJ2}, and then generalised to higher dimensions by Bowditch \cite{BO}). In particular, $p=\infty$ is a bounded parabolic point, and applying \cite[Definition, Page 272]{BO} gives the following lemma.
\begin{lem}\label{Bow}
There exists $\lambda>0$ and a $\kmin$-dimensional linear subspace $V \subseteq \mathbb{R}^d$ such that $\lset \subseteq V_\lambda \cup \{\infty\}$, where $V_\lambda = \{x \in \mathbb{R}^d \mid \inf_{y \in V} ||x-y|| \leq \lambda\}$.
\end{lem}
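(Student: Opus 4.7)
The plan is to deduce the statement from the fact that, under geometric finiteness, $p$ is a \emph{bounded} parabolic fixed point in the sense of Bowditch, combined with the Bieberbach-type structure of its stabiliser.

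First, I would invoke the decomposition $\lset = \lset_{\text{con}} \sqcup \lset_{\text{bp}}$ of the limit set into conical limit points and bounded parabolic fixed points, which holds under geometric finiteness (as cited in the paragraph preceding the lemma). In particular, $p = \infty$ is a bounded parabolic fixed point. Applying Bowditch's definition (\cite[p.~272]{BO}) in this setting, the stabiliser $\Gamma_\infty := \text{Stab}(\infty)$ acts cocompactly on $\lset \setminus \{\infty\}$: there exists a compact set $K \subseteq \mathbb{R}^d$ with $\Gamma_\infty \cdot K \supseteq \lset \setminus \{\infty\}$.

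Next, I would analyse the algebraic structure of $\Gamma_\infty$. In the upper half-space model with $p = \infty$, every element of $\Gamma_\infty$ acts on $\mathbb{R}^d$ as a Euclidean \emph{similarity}; but since $\Gamma_\infty$ contains no loxodromic elements (this was noted in the preliminaries when the rank $k(p)$ was defined), each element is in fact a Euclidean isometry. Thus $\Gamma_\infty$ is a discrete subgroup of $\mathbb{R}^d \rtimes O(d)$. By the Bieberbach theorem, $\Gamma_\infty$ admits a finite-index subgroup $H \trianglelefteqslant \Gamma_\infty$ consisting entirely of pure translations, and by the paper's definition of the rank, the translation vectors of $H$ generate a free abelian group of rank $\kmin$ spanning a $\kmin$-dimensional linear subspace $V \subseteq \mathbb{R}^d$.

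Finally, I would combine the two ingredients. Choose coset representatives $g_1, \dots, g_m$ for $H$ in $\Gamma_\infty$ and set $K' = \bigcup_{i=1}^m g_i(K)$, which is again compact. Then $H \cdot K' = \Gamma_\infty \cdot K \supseteq \lset \setminus \{\infty\}$. For any $x \in \lset \setminus \{\infty\}$, write $x = y + v$ with $y \in K'$ and $v \in V$ (using that $H$ acts purely by translations along $V$); then
\[
\inf_{u \in V} \|x - u\| = \inf_{u \in V} \|y - u\| \;\leq\; \sup_{z \in K'} \inf_{u \in V} \|z - u\| \;=:\; \lambda < \infty,
\]
as $K'$ is compact. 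This gives the desired $\lambda$ and $V$.

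The main subtlety is checking that the Bieberbach translation lattice has the correct dimension $\kmin$, rather than some smaller number: one needs the definition of rank used in the paper to match the maximal rank of a translation sublattice of $\Gamma_\infty$, which does hold because any free abelian subgroup of Euclidean isometries of higher rank would force either more independent translation directions (contradiction) or the presence of non-isometric elements (ruled out by discreteness). Beyond this, the argument is essentially a bookkeeping exercise on top of Bowditch's definition.
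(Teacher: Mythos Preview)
The paper does not actually prove this lemma: it simply records that $p=\infty$ is a bounded parabolic point and cites Bowditch's definition as yielding the statement. Your proposal fleshes out precisely the argument one would give to pass from that definition to the conclusion, and the overall strategy (cocompact action of $\Gamma_\infty$ on $\lset\setminus\{\infty\}$ plus the Euclidean structure of $\Gamma_\infty$) is the right one.

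There is, however, one genuine imprecision. Bieberbach's theorem in its classical form applies to \emph{crystallographic} (cocompact) subgroups of $\text{Isom}(\mathbb{R}^d)$, and that is what guarantees a finite-index subgroup of pure translations. Here $\Gamma_\infty$ need not act cocompactly on all of $\mathbb{R}^d$, and for $d\geq 3$ a parabolic element fixing $\infty$ may act as a screw motion with irrational rotation angle; then no power of it is a pure translation, so there is no finite-index subgroup of $\Gamma_\infty$ consisting entirely of translations, and your step ``write $x=y+v$ with $v\in V$'' breaks down. The clean repair is to invoke the generalised Bieberbach statement (see e.g.\ Ratcliffe, or indeed Bowditch) that any discrete subgroup of $\text{Isom}(\mathbb{R}^d)$ preserves an affine subspace $V$ on which it acts cocompactly, with $\dim V$ equal to the maximal rank of a free abelian subgroup, hence $\dim V=\kmin$. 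Since $\Gamma_\infty$ acts by isometries and preserves $V$, it preserves the function $\text{dist}(\cdot,V)$; thus for $x=g(y)$ with $g\in\Gamma_\infty$ and $y\in K$ one has $\text{dist}(x,V)=\text{dist}(y,V)\leq\sup_{z\in K}\text{dist}(z,V)=:\lambda$ directly, and the passage through a translation subgroup is unnecessary.
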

Note that by Lemma \ref{Bow}, we immediately get $\lowspec \lset \leq \kmin$ for $\theta \geq 1/2$, so we may assume that $\theta \in (0,1/2)$.

We proceed in a similar manner to the lower bound in Section \ref{AsospecLimit2}. Let $p \in \lset$ be a parabolic fixed point with $k(p) = \kmin$, and then switch to the upper half-space model $\mathbb{H}^{d+1}$, and assume that $p = \infty$. Similar to the lower bound in Section \ref{AsospecLimit2}, we may assume that we have a $\mathbb{Z}^{\kmin}$ lattice as a subset of $\lset$, which we denote by $Z$. We partition $Z$ into $\{Z_k\}_{k \in \mathbb{N}}$, where
\[Z_k = \{z \in Z \mid 10(k-1) \leq \size{z} < 10k\} \]
noting that 
\begin{equation}\label{ZK2}
\size{Z_k} \approx k^{\kmin-1}.
\end{equation}
We let $\phi$ denote the Cayley transformation mapping the upper half space model to the Poincar\'e ball model.  Then, as before, there are positive constants $C_1$ and $C_2$ such that  if $z \in Z_k$ for some $k$, then
\begin{align*}
\frac{1}{C_1k} \leq \vert \phi(z) - p_1 \vert \leq \frac{C_1}{k} \qquad 
\text{and} \qquad  \frac{1}{C_2k^2} \leq \vert \phi(B(z,{1}/{3})) \vert \leq \frac{C_2}{k^2},
\end{align*}
where $p_1 = \phi(\infty)$. We let $T>0$, and then by Lemma \ref{Bow}, we may choose a constant $C_3$ such that
\[ B(p_1,e^{-T\theta}) \subseteq \bigcup\limits_{\substack{z \in Z_k  \\ {C_3}/{k^2} >  e^{-T} \\ \vert \phi(z) - p_1 \vert < e^{-T\theta}}} B(\phi(z),C_3/k^2).\]
Then we have
\begin{equation}\label{pack1}
M_{e^{-T}}(B(p_1,e^{-T\theta} \cap \lset) \lesssim \sum_{\substack{z \in Z_k  \\ {C_3}/{k^2} >  e^{-T} \\ \vert \phi(z) - p_1 \vert < e^{-T\theta}}} M_{e^{-T}}(B(\phi(z),C_3/k^2) \cap \lset).
\end{equation}
We wish to estimate $M_{e^{-T}}(B(\phi(z),C_3/k^2) \cap \lset)$ from above.

Let $k \in \mathbb{N}$ satisfy the conditions above, let $\epsilon \in (0,1)$, write $t = \text{log}(k^2/C_3)$. Write $K$ to denote the set of $k$ such that $T-t \leq \max\{\epsilon^{-1},\text{log}10\}$. Then note that
\[\frac{e^{-t}}{e^{-T}} \lesssim 1\]
which gives 
\[M_{e^{-T}}(B(\phi(z),C_3/k^2) \cap \lset) \lesssim 1.\]
Furthermore, using the fact that $C_3/k^2 > e^{-T}$ and the fact that we are summing in $\kmin$ directions, we have
\[\size{K} \lesssim e^{\frac{T\kmin}{2}} \leq e^{T(1-\theta)\kmin} \leq e^{T(1-\theta)\delta}.\]
We now assume that $T-t \geq \max\{\epsilon^{-1},\text{log}10\}$. 

We let $\{x_i\}_{i \in X}$ denote a centred $e^{-T}$-packing of $B(\phi(z),e^{-t}) \cap \lset$ of maximal cardinality, and decompose X as
\begin{equation*}
X = X_0 \cup X_1 \cup \sum\limits_{n=2}^{\infty} X_n
\end{equation*}
where
\begin{align*}
X_0 &= \{i \in X \mid (x_i)_T \in H_p \ \text{with} \ \size{H_p} \geq 10e^{-t}\}\\
X_1 &= \{i \in X \setminus X_0 \mid \rho(x_i,T) \leq \epsilon(T-t) \}\\
X_n &= \{i \in X \setminus (X_0 \cup X_1) \mid n-1 < \rho(x_i,T) \leq n\}.
\end{align*}
For the details on estimating the cardinalities of $X_0$, $X_1$ and $X_n$, we refer the reader to \cite[Section 5.1]{Fr1}.

Suppose that $i \in X_0$. Then $(x_i)_T \in H_p$ for some unique parabolic fixed point $p$ with $\size{H_p} \geq 10e^{-t}$. If $\delta > k(p)$, then it can be shown that
\[\size{X_0} \lesssim \left(\frac{e^{-t}}{e^{-T}} \right)^\delta \]
and if $\delta \leq k(p)$, then it can be shown that
\begin{align*}
\size{X_0} &\lesssim \left(\frac{e^{-t}}{e^{-T}} \right)^\delta \max_{i \in X_0} e^{(\rho(\phi(z),t)-\rho(x_i,T))(k(p)-\delta)}  
 \lesssim \left(\frac{e^{-t}}{e^{-T}} \right)^\delta \max_{i \in X_0} e^{\rho(\phi(z),t)(k(p)-\delta)}.
\end{align*}
In a similar manner to the lower bound in Section \ref{AsospecLimit2}, we can show that $\rho(\phi(z),t) \leq C_4$ for some constant $C_4$, see \eqref{rhoc4}. Therefore, we have
\begin{align*}
\size{X_0} \lesssim \left(\frac{e^{-t}}{e^{-T}} \right)^\delta  e^{C_4(\kmax-\delta)} 
\lesssim \left(\frac{e^{-t}}{e^{-T}} \right)^\delta.
\end{align*}
Similarly, for $i \in X_1$, we have
\begin{align*}
\size{X_1} \lesssim \left(\frac{e^{-t}}{e^{-T}} \right)^{\delta+\epsilon(\delta-\kmin)} e^{\rho(\phi(z),t)(k(p)-\delta)} \lesssim \left(\frac{e^{-t}}{e^{-T}} \right)^{\delta+\epsilon(\delta-\kmin)}
\end{align*}
and if $i \in X_n$ for some $n \geq 2$, then we have
\begin{equation*}
\size{X_n} \lesssim \epsilon^{-1}n e^{-n\delta}   \left(\frac{e^{-t}}{e^{-T}} \right)^{\delta}.
\end{equation*}
It follows that
\begin{align*}
\size{X} = \size{X_0} + \size{X_1} + \sum\limits_{n=2}^{\infty} \size{X_n} 
&\lesssim \left(\frac{e^{-t}}{e^{-T}} \right)^{\delta+\epsilon(\delta-\kmin)} + \sum\limits_{n=2}^{\infty} \epsilon^{-1}n e^{-n\delta}   \left(\frac{e^{-t}}{e^{-T}} \right)^{\delta}\\
&\lesssim \left(\frac{e^{-t}}{e^{-T}} \right)^{\delta+\epsilon(\delta-\kmin)} + \epsilon^{-1} \left(\frac{e^{-t}}{e^{-T}} \right)^{\delta}\\
&= \left(\frac{C_3/k^2}{e^{-T}} \right)^{\delta+\epsilon(\delta-\kmin)} + \epsilon^{-1} \left(\frac{C_3/k^2}{e^{-T}} \right)^{\delta}
\end{align*}
and so by (\ref{pack1}),
\begin{align*}
 M_{e^{-T}}(B(p_1,e^{-T\theta} \cap \lset) 
&\lesssim \sum_{\substack{z \in Z_k  \\ {C_3}/{k^2} >  e^{-T} \\ \vert \phi(z) - p_1 \vert < e^{-T\theta}}}\left(\frac{C_3/k^2}{e^{-T}} \right)^{\delta+\epsilon(\delta-\kmin)} + \epsilon^{-1} \left(\frac{C_3/k^2}{e^{-T}} \right)^{\delta}\\
&\lesssim (1+\epsilon^{-1}) e^{T\delta+\epsilon(\delta-\kmin)} \sum_{k = \lceil e^{T\theta}/C_1 \rceil}^{\lfloor \sqrt{C_3}e^{T/2} \rfloor} \vert Z_k \vert  \left(C_3/{k^2}\right)^{\delta+\epsilon(\delta-\kmin)} \\
&\lesssim  (1+\epsilon^{-1}) e^{T\delta+\epsilon(\delta-\kmin)}  \sum_{k = \lceil e^{T\theta}/C_1 \rceil}^{\lfloor \sqrt{C_3}e^{T/2} \rfloor} k^{\kmin-1-2\delta-2\epsilon(\delta-\kmin)} \ \text{by} \ (\ref{ZK2}) \\
&\lesssim  (1+\epsilon^{-1}) e^{T\delta+\epsilon(\delta-\kmin)} \int_{e^{T\theta}/C_1 }^{ \sqrt{C_3}e^{T/2} } x^{\kmin-1-2\delta-2\epsilon(\delta-\kmin)} dx\\
&\lesssim  (1+\epsilon^{-1}) e^{T\delta+\epsilon(\delta-\kmin)}  e^{T\theta(\kmin-2\delta-2\epsilon(\delta-\kmin))} dx\\
&\lesssim  (1+\epsilon^{-1}) \left(e^{T(1-\theta)} \right)^{\delta-\frac{\theta}{1-\theta}(\delta-\kmin) + \epsilon(\delta-\kmin)} e^{-T\theta \epsilon(\delta-\kmin)}\\
&\lesssim  (1+\epsilon^{-1}) \left(e^{T(1-\theta)} \right)^{\delta-\frac{\theta}{1-\theta}(\delta-\kmin) + \epsilon(\delta-\kmin)}
\end{align*}
which proves \[\lowspec \lset \leq \delta-\frac{\theta}{1-\theta}(\delta-\kmin) + \epsilon(\delta-\kmin)\]
and letting $\epsilon \rightarrow 0$ proves the upper bound.

\subsubsection{When $\delta \geq \kmax$}\label{LowspecLimit3} 
 We show \[\lowspec\lset =  \delta - \min\left\{1,\frac{\theta}{1-\theta}\right\}(\delta-\kmin).\]
The lower bound follows from the fact that 
\[\lowspec \lset \geq \lowspec \mu = \delta - \min\left\{1,\frac{\theta}{1-\theta}\right\}(\delta-\kmin)\]
and therefore it suffices to prove the upper bound. The case when $\theta \geq {1}/{2}$ is an immediate consequence of Lemma \ref{Bow}, and so we may assume that $\theta \in (0,{1}/{2})$. Let $p \in P$ be a parabolic point with rank $\kmin$, and let $T>0$ be sufficiently large. We can then attain an upper bound for 
$N_{e^{-T}}(B(p,e^{-T\theta}) \cap \lset)$ by first covering $B(p,e^{-T\theta}) \cap \lset$ with a $\kmin$-dimensional collection of balls of radius $e^{-2T\theta}$ using Lemma \ref{Bow}, then covering each of those balls by balls of radius $e^{-T}$. Using the fact that $\aso \lset = \delta$, we have
\begin{align*}
N_{e^{-T}}(B(p,e^{-T\theta}) \cap \lset) \lesssim N_{e^{-2T\theta}}(B(p,e^{-T\theta}) \cap \lset) \left(\frac{e^{-2T\theta}}{e^{-T}} \right)^{\delta}
&\lesssim   \left(\frac{e^{-T\theta}}{e^{-2T\theta}} \right)^{\kmin} \left(\frac{e^{-2T\theta}}{e^{-T}} \right)^{\delta}\\
&= e^{T(1-\theta)\delta + T \theta (\kmin-\delta)} \\
&= \left(e^{T(1-\theta)}\right)^{\delta - \frac{\theta}{1-\theta}(\delta-\kmin)}
\end{align*}
which proves 
\[\lowspec \lset \leq \delta - \frac{\theta}{1-\theta}(\delta-\kmin) \]
as required.

\begin{center} \textbf{Acknowledgements}
\end{center}
JMF was financially supported by an \textit{EPSRC Standard Grant} (EP/R015104/1) and a \textit{Leverhulme Trust Research Project Grant} (RPG-2019-034). LS was financially supported by the University of St Andrews.

\bibliographystyle{apalike}
\addcontentsline{toc}{section}{References}
\bibliography{Kleinian}
\end{document}